\newtheorem{theorem}{Theorem}[section]
 \newaliascnt{lemma}{theorem}
  \newtheorem{lemma}[lemma]{Lemma}
  \newaliascnt{proposition}{theorem}
  \newtheorem{proposition}[proposition]{Proposition}
  \newaliascnt{corollary}{theorem}
 \newaliascnt{condition}{theorem}
\newtheorem{condition}[condition]{Condition}
\theoremstyle{definition}
 \newaliascnt{definition}{theorem}
\newtheorem{definition}[definition]{Definition}
  \newaliascnt{remark}{theorem}
  \newtheorem{remark}[remark]{Remark}
\renewcommand{\_}{\underline{\,\,\,\,}}
\newcommand{\CC}{\ensuremath{\mathbb{C}}}
\newcommand{\EE}{\ensuremath{\mathbb{E}}}
\newcommand{\PP}{\ensuremath{\mathbb{P}}}
\newcommand{\ZZ}{\ensuremath{\mathbb{Z}}} 
\newcommand{\WW}{\ensuremath{\mathbb{W}}}
\newcommand{\cC}{\mathcal{C}}
\newcommand{\cE}{\mathcal{E}}
\newcommand{\cF}{\mathcal{F}}
\newcommand{\cG}{\mathcal{G}}
\newcommand{\cM}{\mathcal{M}}
\newcommand{\cO}{\mathcal{O}}
\newcommand{\cT}{\mathcal{T}}
\newcommand{\cS}{\mathcal{S}}
\newcommand{\cR}{\mathcal{R}}
\newcommand{\reg}{\mathcal{O}}
\newcommand{\sym}{\mathfrak{S}}
\newcommand{\fa}{\mathfrak{a}}
\newcommand{\fS}{\mathfrak{S}}
\DeclareMathOperator{\Coh}{Coh}
\DeclareMathOperator{\End}{End} 
\DeclareMathOperator{\Ext}{Ext}
\DeclareMathOperator{\Hom}{Hom}
\DeclareMathOperator{\id}{id}
\DeclareMathOperator{\NS}{NS}
\DeclareMathOperator{\Pic}{Pic}
\DeclareMathOperator{\rk}{rk}
\DeclareMathOperator{\Ind}{Ind}
\DeclareMathOperator{\Res}{Res}
\DeclareMathOperator\Db{D^b}
\DeclareMathOperator\DbS{D^b_{\mathfrak S_n}}
\DeclareMathOperator{\irr}{irr}
\title[Moduli spaces of generalised tautological bundles]{Moduli spaces of generalised tautological bundles on Hilbert schemes}
\author{Andreas Krug}
\address{Institut f\"ur Algebraische Geometrie, Leibniz Universit\"at Hannover, Welfengarten 1, 30167 Hannover, Germany}
\email{krug@math.uni-hannover.de}
\author{Fabian Reede}
\address{Freie Waldorfschule Hannover-Maschsee, Germany}
\email{reede@waldorfschule-maschsee.de}
\author{Ziyu Zhang}
\address{Institute of Mathematical Sciences, ShanghaiTech University, 393 Middle Huaxia Road, 201210 Shanghai, P.R.China}
\email{zhangziyu@shanghaitech.edu.cn}
\subjclass[2020]{Primary: 14J60; Secondary: 14C05, 14D20, 14E16}
\keywords{stable sheaves, moduli spaces, Hilbert schemes, tautological bundles}
\begin{document}

\begin{abstract}
We construct new stable vector bundles on Hilbert schemes of points on algebraic surfaces, which are parametrised by connected components of their moduli spaces. This work generalises aspects of our previous work on tautological bundles and of recent work of O'Grady.
\end{abstract}

\maketitle

\setcounter{section}{-1}

\section{Introduction}

\subsection{Background}

Let $X$ be a smooth projective variety over the field of complex numbers. The moduli space $M_X$ of stable sheaves on $X$ reflects much geometric information of $X$. These moduli spaces are thoroughly studied when $\dim X \leqslant 2$. However, a lot remains unknown when $X$ is of higher dimension.

Let $S$ be a smooth projective surface, and $X = S^{[n]}$ be the Hilbert scheme of $0$-dimensional subschemes of length $n$ on $S$. The moduli space of stable sheaves $M_{S^{[n]}}$ in this particular case has attracted wide attention; see \cite{schlick_10, wandel_tautological_2016, stapleton_taut_2016, reede_zhang_22, ogrady_rigid} among others. Since the general behavior of these moduli spaces tends to be complicated, much effort was devoted to construct particular examples of stable sheaves on $S^{[n]}$.

Among the examples that have been constructed so far, there is a particular class called the \emph{tautological bundles}; see e.g.\ \cite{stapleton_taut_2016}. Under favorable conditions, the tautological bundles are stable, and their parameter spaces become components of $M_{S^{[n]}}$. In the present manuscript, we aim to generalise the notion of tautological bundles and construct a new series of components of $M_{S^{[n]}}$.

\subsection{Construction}

We study the following type of bundles on $S^{[n]}$, which will be called \emph{generalised tautological bundles}. More precisely, let $$\lambda=(\lambda_1, \lambda_2, \dots, \lambda_k)\dashv n$$ be a partition of $n$, and
\[
\sym_\lambda \coloneqq \sym_{\lambda_1}\times \sym_{\lambda_2}\times \dots \times \sym_{\lambda_k}\leqslant \sym_n
\]
the stabliliser of the associated partition of the set $\{1,2,\dots , n\}$.
For each $j=1,\dots, k$, let $E_j$ be a vector bundle on $S$ and let $W_j$ an irreducible representation of $\sym_{\lambda_j}$. We consider
$$ G_\lambda^\WW(E_1, \dots, E_k) \coloneqq \Ind_{\sym_{\lambda}}^{\sym_n}\Bigl((E_1^{\boxtimes \lambda_1}\otimes W_1)\boxtimes (E_2^{\boxtimes \lambda_2}\otimes W_2)\boxtimes \dots \boxtimes (E_k^{\boxtimes \lambda_k}\otimes W_k) \Bigr), $$
which is an $\sym_n$-equivariant vector bundle on $S^n$. Via the derived equivalence
$$ \Psi \colon \DbS(S^n) \longrightarrow \Db(S^{[n]}) $$
established by Bridgeland-King-Reid and Haiman in \cite{BKR, Hai}, we obtain
$$ F_\lambda^\WW(E_1, \dots, E_k) \coloneqq \Psi \left( G_\lambda^\WW(E_1, \dots, E_k) \right). $$
We will see in \autoref{subsect:gentaut} that $F_\lambda^\WW(E_1, \dots, E_k)$ is a vector bundle on $S^{[n]}$, and call it the \emph{generalised tautological bundle} associated to $E_1, \dots, E_k$ and $W_1, \dots, W_k$.

\subsection{Main results}

The following theorems are the main results of the present manuscript. Similar to the classical situation, the generalised tautological bundle $F_\lambda^\WW(E_1, \dots, E_k)$ inherits stability from its ingredients $E_1, \dots, E_k$.

\begin{theorem}[\autoref{same_div}]\label{thm:main1}
	Assume that $E_1, \dots, E_k$ are pairwise non-isomorphic slope stable vector bundles with respect to an ample line bundle $H$ on $S$. Then $F_\lambda^\WW(E_1, \dots, E_k)$ is a slope stable vector bundle with respect to a suitable ample line bundle $\tilde{H}$ on $S^{[n]}$.
\end{theorem}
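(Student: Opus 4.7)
The plan is to transfer the stability question across the Bridgeland--King--Reid--Haiman equivalence $\Psi$ and establish the $\sym_n$-equivariant slope stability of $G_\lambda^\WW(E_1,\dots, E_k)$ on $S^n$. I would take $\tilde H$ on $S^{[n]}$ of the form $H_n - \tfrac{t}{2}E$ for sufficiently small $t > 0$, where $H_n$ denotes the descent of $H^{\boxtimes n}$ and $E$ is the exceptional divisor of the Hilbert--Chow morphism. Following the approach developed in \cite{reede_zhang_22} for classical tautological bundles, slope $\tilde H$-stability of $F_\lambda^\WW$ on $S^{[n]}$ should reduce to $\sym_n$-equivariant slope $H^{\boxtimes n}$-stability of $G_\lambda^\WW$ on $S^n$.

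Non-equivariantly, $G_\lambda^\WW$ decomposes as $\bigoplus_{[\sigma] \in \sym_n/\sym_\lambda} \sigma\cdot V$, where the plain sheaf $V = (E_1^{\boxtimes \lambda_1}\boxtimes\dots\boxtimes E_k^{\boxtimes \lambda_k})^{\oplus \prod_j \dim W_j}$. Up to multiplicity, each summand $\sigma\cdot V$ is an external tensor product of stable bundles on the various factors of $S^n$, hence slope $H^{\boxtimes n}$-stable in characteristic zero. Moreover the slope of $\sigma\cdot V$ is symmetric in the tensor positions and thus independent of $\sigma$, so $G_\lambda^\WW$ is non-equivariantly slope polystable.

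To promote this to equivariant stability, I would compute $\End_{\sym_n}(G_\lambda^\WW)$ via Frobenius reciprocity and Mackey's formula, obtaining a sum indexed by $\sym_\lambda\backslash\sym_n/\sym_\lambda$. For any double coset representative $\sigma\notin\sym_\lambda$, the permutation places some $E_i$ into a position previously occupied by $E_j$ with $i\neq j$; the non-equivariant Hom $\Hom(V,\sigma V)$ then vanishes by the pairwise non-isomorphism and stability of the $E_j$. The trivial double coset contributes
\[
\End_{\sym_\lambda}(V) \;=\; \bigotimes_{j=1}^k \End_{\sym_{\lambda_j}}\bigl(E_j^{\boxtimes \lambda_j}\otimes W_j\bigr) \;=\; \bigotimes_{j=1}^k \End_{\sym_{\lambda_j}}(W_j) \;=\; \CC,
\]
using that $\End(E_j^{\boxtimes \lambda_j}) = \CC$ carries the trivial $\sym_{\lambda_j}$-action and that each $W_j$ is irreducible (Schur's lemma). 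Now if $G'\subsetneq G_\lambda^\WW$ were an equivariant subsheaf of maximal slope, polystability would split the inclusion non-equivariantly, averaging over $\sym_n$ would yield an equivariant splitting, and the resulting non-scalar idempotent would contradict $\End_{\sym_n}(G_\lambda^\WW) = \CC$. Hence $G_\lambda^\WW$ is equivariantly slope stable.

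The main technical obstacle will lie in the first step: rigorously reducing $\tilde H$-stability on $S^{[n]}$ to equivariant $H^{\boxtimes n}$-stability on $S^n$. This requires a careful slope expansion in the parameter $t$ and a matching of potentially destabilising subsheaves of $F_\lambda^\WW$ with equivariant subsheaves of $G_\lambda^\WW$ under $\Psi$, extending the $t$-limit arguments from the classical tautological setting of \cite{reede_zhang_22} to arbitrary $\lambda$ and $W_j$. By comparison, the equivariant/representation-theoretic core of the argument described above should generalise smoothly once this reduction is in place.
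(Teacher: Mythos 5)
Your overall architecture coincides with the paper's: establish $\sym_n$-equivariant slope stability of $G_\lambda^\WW(E_1,\dots,E_k)$ on $S^n$, transfer it to slope stability of $F_\lambda^\WW(E_1,\dots,E_k)$ with respect to the semi-ample class $H_{S^{[n]}}=\mu^*H_{S^{(n)}}$, and then perturb into the ample cone. For the equivariant core, however, your route differs genuinely from the paper's \autoref{ind_stable}, which shows that an equivariant subsheaf of the same slope is a direct summand, decomposes it via Krull--Schmidt, equalises multiplicities by equivariance, and concludes using the irreducibility of $W$ through the factorisation $E^{\oplus s}\hookrightarrow E\otimes W$. You instead compute $\End_{\sym_n}(G_\lambda^\WW)=\CC$ and combine this simplicity with polystability and an averaging argument; this works (your averaged projector restricts to the identity on the equivariant summand $U$ and has image inside $U$, hence is a genuine non-scalar equivariant idempotent), and it is arguably cleaner. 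One point needs repair: a single positional mismatch $F_i\not\cong F_{\sigma(i)}$ does \emph{not} by itself give $\Hom(F_i,F_{\sigma(i)})=0$, since the $E_j$ are not assumed to have equal slopes. As in \autoref{prop:box-stability}, you must observe that $\sigma$ permutes the multiset of slopes, so among the mismatched positions at least one satisfies $\mu_H(F_i)\geqslant\mu_H(F_{\sigma(i)})$; only then does stability kill that K\"unneth factor and hence the whole tensor product.

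The genuine gap lies in the step you defer. Matching destabilising subsheaves of $F_\lambda^\WW$ with equivariant subsheaves of $G_\lambda^\WW$ cannot be done with $\Psi^{-1}$ itself: it is a derived equivalence and does not send subsheaves to subsheaves. The paper uses Stapleton's functor $(\_)_{S^n}$, which does preserve subsheaves and is slope-compatible, together with \autoref{compare_stap} (resting on reflexivity of $G_\lambda^\WW$) to identify its value on generalised tautological bundles with $\Psi^{-1}$; this yields stability only with respect to the semi-ample $H_{S^{[n]}}$ (\autoref{gamma_stable}, \autoref{thm:Fstable}). The subsequent passage to an ample $\tilde H$ is not just ``a careful slope expansion in $t$'': since stability with respect to a boundary class is not an open condition for free, the paper embeds $F_\lambda^\WW(E_1,\dots,E_k)$ into a fixed locally free sheaf $V^\vee$ and invokes the finiteness of first Chern classes of potential destabilisers with respect to a movable curve class (Greb--Kebekus--Peternell) to control all of them simultaneously before perturbing. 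For the single-tuple statement a non-uniform $t>0$ suffices, but some such finiteness input is still required; without it the claim that a small perturbation preserves stability is unjustified.
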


Moreover, when the components $E_1, \dots, E_k$ deform in their own moduli spaces, the generalised tautological bundle deform along with them in families.

\begin{theorem}[\autoref{thm:moduli-main-thm}]\label{thm:main2}
	Let $M_1, \dots, M_k$ be connected components of moduli spaces of slope stable vector bundles with respect to an ample line bundle $H$ on $S$. We assume that $M_1, \dots, M_k$ are smooth and projective, and that the Young diagrams associated to $W_1, \dots, W_k$ are rectangular. Then possibly after tensoring all bundles parametrised by $M_j$ with a line bundle $L_j$ for $j=1, \dots, k$ (see \autoref{cond:moduli_dist} and \autoref{cond:Homvanish}), there exists a morphism
	\begin{equation*}
		\varphi_\lambda^\WW \colon M_1 \times \dots \times M_k \longrightarrow M_{S^{[n]}}\quad,\quad \bigl([E_1],\dots,[E_k] \bigr)\mapsto[F_\lambda^\WW(E_1, \dots, E_k)]
	\end{equation*}
	which is an isomorphism to a connected component of $M_{S^{[n]}}$.
\end{theorem}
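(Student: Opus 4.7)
The plan is to construct $\varphi_\lambda^\WW$ via a flat family of generalised tautological bundles parametrised by $M_1 \times \dots \times M_k$ and then to verify that the resulting classifying map is injective on closed points, has an injective differential everywhere, and has closed image in $M_{S^{[n]}}$. First I would observe that the construction $F_\lambda^\WW$ is compatible with base change, since both the induction functor $\Ind_{\sym_\lambda}^{\sym_n}$ and the BKR/Haiman equivalence $\Psi$ arise from relative constructions. Taking (quasi-)universal families $\mathcal{E}_j$ on $M_j \times S$, whose existence is enabled by the line-bundle twists supplied by \autoref{cond:moduli_dist} and \autoref{cond:Homvanish}, pulling them back to $(M_1 \times \dots \times M_k) \times S$, forming the relative analogue of $G_\lambda^\WW$ on $(M_1 \times \dots \times M_k) \times S^n$, and then applying the relative BKR/Haiman equivalence produces a flat family on $(M_1 \times \dots \times M_k) \times S^{[n]}$. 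Every fibre is slope stable by \autoref{thm:main1}, so the family defines the morphism $\varphi_\lambda^\WW$.

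For injectivity on closed points, I would start from an isomorphism $F_\lambda^\WW(E_1, \dots, E_k) \cong F_\lambda^\WW(E_1', \dots, E_k')$, pass back through $\Psi^{-1}$ to an isomorphism of the $\sym_n$-equivariant bundles $G_\lambda^\WW$ on $S^n$, and restrict to the open locus of pairwise distinct points, where Frobenius reciprocity lets one read off each $E_j$ as the $\boxtimes$-factor attached to its specific Specht summand. The hypothesis of \autoref{cond:moduli_dist} is precisely what distinguishes the triples $(\lambda_j, W_j, [E_j])$ from one another, so the data are recovered uniquely up to the inevitable reordering of equal parts of $\lambda$.

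The crucial technical step is the infinitesimal analysis. Setting $F := F_\lambda^\WW(E_1, \dots, E_k)$ and $G := G_\lambda^\WW(E_1, \dots, E_k)$, the equivalence $\Psi$ gives $\Ext^1_{S^{[n]}}(F, F) \cong \Ext^1_{\sym_n}(G, G)$, and Frobenius reciprocity expands the latter as a double-coset sum over $\sym_\lambda \backslash \sym_n / \sym_\lambda$, with each summand a K\"unneth product of $\Ext$-groups between tensor factors $E_i^{\boxtimes \lambda_i} \otimes W_i$ and $E_j^{\boxtimes \lambda_j} \otimes W_j$. The role of \autoref{cond:Homvanish} is to kill every cross-term with $i \neq j$, leaving only the diagonal double coset. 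On that summand, using $\End_S(E_j) = \CC$ (from slope stability) and the decomposition of the permutation representation $\CC^{\lambda_j}$ into trivial plus standard, the diagonal $\Ext^1$-piece becomes
\[
\Ext^1_S(E_j, E_j) \otimes \Bigl( \CC \oplus \Hom_{\sym_{\lambda_j}}\bigl(W_j, W_j \otimes V_{\mathrm{std}}\bigr) \Bigr).
\]
The rectangular Young diagram hypothesis is precisely what forces the second summand to vanish: a partition has a unique removable corner iff it is rectangular, and by branching plus Frobenius reciprocity this is equivalent to $W_j$ not appearing in $W_j \otimes V_{\mathrm{std}}$. This yields the clean identification
\[
\Ext^1_{S^{[n]}}(F, F) \;\cong\; \bigoplus_{j=1}^k \Ext^1_S(E_j, E_j),
\]
matching the tangent space of $M_1 \times \dots \times M_k$ at $([E_1], \dots, [E_k])$ and coinciding with $d\varphi_\lambda^\WW$ via the family construction; simplicity of $F$ already follows from \autoref{thm:main1}.

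To finish, smoothness of each $M_j$ together with $d\varphi_\lambda^\WW$ being an isomorphism at every point shows $\varphi_\lambda^\WW$ is \'etale onto its image, and point-injectivity then upgrades this to an open immersion. Properness of $M_1 \times \dots \times M_k$ together with separatedness of $M_{S^{[n]}}$ force the image to be closed as well, hence a full connected component of $M_{S^{[n]}}$. The main obstacle is the $\Ext^1$ computation: keeping control of the double-coset decomposition and correctly invoking both the rectangularity and \autoref{cond:Homvanish} to eliminate every non-diagonal and every non-scalar contribution is the genuinely delicate ingredient of the proof.
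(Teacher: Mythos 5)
Your proposal is correct and follows essentially the same route as the paper: a relative/family construction of $F_\lambda^\WW$ giving the classifying morphism, injectivity on closed points via Frobenius reciprocity and the non-isomorphy hypotheses of \autoref{cond:moduli_dist}, the tangent-space computation $\Ext^1(F,F)\cong\bigoplus_j\Ext^1(E_j,E_j)$ with \autoref{cond:Homvanish} killing cross-terms and rectangularity killing the standard-representation contribution (exactly the removable-corner criterion the paper cites), and the standard injective-plus-tangent-dimension argument to conclude. The only cosmetic differences are your double-coset formulation of the $\Ext^1$ decomposition versus the paper's single-coset sum of $\Ext_{\sym_\lambda}$-groups, and your spelling out of the final Zariski-main-theorem step, which the paper delegates to a cited lemma.
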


This theorem identifies many smooth projective connected components of the moduli space $M_{S^{[n]}}$, which are mostly of higher dimensions.

Another result we prove in the present manuscript is a closed formula for the first Chern classes of generalised tautological bundles; see \autoref{thm:c1}. This is logically independent of the above theorems. However, besides general curiosity, there are two other important reasons for carrying out this computation explicitly.

First of all, for the stability of the generalised tautological bundle $F_\lambda^\WW(E_1, \dots, E_k)$, it is necessary to require that the stable bundles $E_i$ are \emph{pairwise non-isomorphic}; see \autoref{rem:counterexamplestab} and \autoref{rmk:another-counter}. Without this technical assumption, $F_\lambda^\WW(E_1, \dots, E_k)$ could become unstable, which means that 
$\varphi_\lambda^\WW$ is not well-defined over the diagonals of $M_1\times \dots\times M_k$ if some of the moduli spaces $M_i$ coincide. 
In this case, we expect some blow-up of $M_1\times\dots\times M_k$, or a finite group quotient thereof, to be isomorphic to a connected component of $M_{S^{[n]}}$; compare \cite{krug_smooth_2024} for this phenomenon in a similar situation. 
 We plan to come back to this issue in a subsequence work, where the formula for the first Chern class will be used to find a destabilising subbundle.

Secondly, when $S$ is a K3 surface, the Hilbert scheme $S^{[n]}$ gives an important series of examples of compact hyperkähler manifolds. In such a case there is a class of torsion free sheaves on $S^{[n]}$ of particular interests, called the \emph{modular sheaves}, whose behaviors are similar to those on K3 surfaces; see \cite{ogrady_many, ogrady_rigid} among others. Since modular sheaves are characterised by a topological condition involving the first two Chern classes, and our strategy can likely be helpful for computing the second Chern class as well, this formula could serve as the starting point to study when the generalized tautological bundles are modular. Note that many of the examples of modular sheaves provided in \cite{ogrady_many, ogrady_rigid} belong to the class of generalized tautological bundles.

\subsection{Structure of the paper}

The main content of this manuscript consists of 4 sections. In \autoref{sect:gentaut} we construct the class of generalised tautological bundles and study their first properties. Then we prove \autoref{thm:main1} in \autoref{sect:stab}, which establishes the stability of the generalised tautological bundles under appropriate polarisation. In \autoref{sect:moduli} we study moduli spaces of generalised tautological bundles and prove \autoref{thm:main2}, which characterise certain components of stable bundles on the Hilbert scheme. Finally we compute the first Chern classes of such bundles in \autoref{sect:Chern}.

\subsection{Acknowledgements}

Z.~Zhang is supported by National Natural Science Foundation of China (Grant No.~12371046) and Science and Technology Commission of Shanghai Municipality (Grant No.~22JC1402700).

\section{Generalised tautological bundles}\label{sect:gentaut}

The main objects that we study in this paper are generalizations of tautological bundles on Hilbert schemes. In this section, we give an explicit construction of these objects and study some basic properties.

\subsection{The Bridgeland-King-Reid-Haiman equivalence}

Let $S$ be a smooth projective surface, then we have the product $S^n$ of $n$ copies of $S$, the symmetric product $S^{(n)} \coloneqq S^n/\mathfrak{S}_n$, and the Hilbert scheme $S^{[n]}$ of $n$ points on $S$. The isospectral Hilbert scheme is defined by $I^nS \coloneqq (S^{[n]}\times_{S^{(n)}}S^n)_{\mathrm{red}}$, which fits in a commutative diagram
\begin{equation}\label{eqn:maindiagram}
	\begin{tikzcd}
		I^nS \arrow[r,"p"]\arrow[d,swap,"q"] & S^n\arrow[d,"\pi"]\\
		S^{[n]}\arrow[r,"\mu"']& S^{(n)}
	\end{tikzcd}
\end{equation}
in which the top row is equipped with an $\mathfrak{S}_n$-action. We recall the derived McKay correspondence of Bridgeland-King-Reid and Haiman \cite{BKR, Hai} in the version of \cite{krug_remarks_2018}:
\begin{equation}\label{eqn:derived_McKay}
	\Psi \colon \DbS(S^n)\longrightarrow \Db(S^{[n]}), \quad E \longmapsto q_\ast^{\mathfrak{S}_n} \circ Lp^\ast (E),
\end{equation}
which is an equivalence of derived categories. Note that $$q_{*}^{\mathfrak{S}_n}(\_) \coloneqq (\_)^{\sym_n}\circ q_*$$ denotes the composition the push-forward and the functor of taking invariants. It does not need to be derived as $q$ is finite, and taking invariants is exact in characteristic 0.

For any variety $T$, we can also consider the relative version
\begin{equation*}
	\begin{tikzcd}
		I^nS \times T \arrow[r,"p'"]\arrow[d,swap,"q'"] & S^n \times T \arrow[d,"\pi'"]\\
		S^{[n]} \times T \arrow[r,"\mu'"']& S^{(n)} \times T
	\end{tikzcd}
\end{equation*}
where each arrow represents the product of the corresponding morphism in \eqref{eqn:maindiagram} and $\id_T$. The top row is still equipped with an $\fS_n$-action on its first factor, and we can similarly define
\begin{equation*}
	\Psi' \coloneqq {q'_\ast}^{\fS_n} \circ Lp'^\ast. 
\end{equation*}

The following result shows that the above functors preserve local freeness.

\begin{lemma}\label{BKR-fam}
Let $\mathcal{E}$ be an $\mathfrak{S}_n$-equivariant locally free sheaf on $S^n\times T$. Then $\Psi'(\mathcal{E})$ is a locally free sheaf satisfying $\Psi'(\mathcal{E})_t\cong\Psi(\mathcal{E}_t)$ for every $t\in T$.
\end{lemma}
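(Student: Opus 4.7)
The plan is to verify the conclusion in two steps: first that $\Psi'(\mathcal{E})$ is locally free, then that its fiber at $t\in T$ agrees with $\Psi(\mathcal{E}_t)$. The strategy is to decompose $\Psi' = {q'_\ast}^{\mathfrak{S}_n} \circ Lp'^{\ast}$ into its three constituent operations, namely derived pullback along $p'$, pushforward along $q'$, and formation of $\mathfrak{S}_n$-invariants, and to check that each one preserves local freeness and commutes with restriction to the fibre over $t$.

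For local freeness, since $\mathcal{E}$ is locally free one has $Lp'^{\ast}\mathcal{E} = p'^{\ast}\mathcal{E}$, which is an $\mathfrak{S}_n$-equivariant locally free sheaf on $I^nS \times T$. The key input is Haiman's theorem that $q \colon I^nS \to S^{[n]}$ is finite and flat, from which $q' = q \times \id_T$ inherits the same properties. Finite flat pushforward preserves local freeness, since locally $p'^{\ast}\mathcal{E}$ is a direct summand of $\mathcal{O}^N$ and $q'_{\ast}\mathcal{O}_{I^nS\times T}$ is locally free of rank $n!$. Hence $q'_{\ast}(p'^{\ast}\mathcal{E})$ is equivariantly locally free on $S^{[n]} \times T$. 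In characteristic zero the $\mathfrak{S}_n$-invariant subsheaf is a direct summand via the Reynolds projector $\tfrac{1}{n!}\sum_{g \in \mathfrak{S}_n} g$, so $\Psi'(\mathcal{E})$ is locally free as well.

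For the compatibility with fibres, form the Cartesian square
\[
\begin{tikzcd}
I^nS \arrow[r,"\tilde\iota"] \arrow[d,"q"'] & I^nS\times T \arrow[d,"q'"] \\
S^{[n]} \arrow[r,"\iota"'] & S^{[n]}\times T
\end{tikzcd}
\]
associated with the inclusion of the fibre over $t$. Since $q'$ is flat, flat base change gives $\iota^{\ast} q'_{\ast}(p'^{\ast}\mathcal{E}) \cong q_{\ast}\tilde\iota^{\ast}(p'^{\ast}\mathcal{E}) = q_{\ast}p^{\ast}(\mathcal{E}_t)$, and this isomorphism is $\mathfrak{S}_n$-equivariant. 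Finally, restriction along $\iota$ commutes with the exact functor of $\mathfrak{S}_n$-invariants (it is a direct summand in characteristic zero), which yields $\Psi'(\mathcal{E})_t \cong \Psi(\mathcal{E}_t)$.

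I do not foresee a serious obstacle: Haiman's flatness result does the heavy lifting and the remaining steps are essentially bookkeeping in homological algebra. The only subtlety worth highlighting is that the closed immersion $\iota$ is not flat, so one has to invoke flat base change in the direction that uses flatness of $q'$; local freeness of $q'_{\ast}(p'^{\ast}\mathcal{E})$ then ensures that the underived restriction appearing in $\Psi'(\mathcal{E})_t$ agrees with the derived one that the base change theorem produces.
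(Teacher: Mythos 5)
Your proposal is correct and follows essentially the same route as the paper: local freeness via $Lp'^\ast=p'^\ast$ on locally free sheaves, finite flat pushforward along $q'$, and invariants as a direct summand in characteristic zero, followed by base change along the fibre inclusion and exactness of taking $\mathfrak{S}_n$-invariants. Your extra remark about which map supplies the flatness in the base-change step is a sound clarification of a point the paper passes over as "the usual base change," but it does not change the argument.
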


\begin{proof}
We first show that $\Psi'(\mathcal{E})$ is locally free on $S^{[n]}\times T$.  Since $\mathcal{E}$ is locally free, $Lp'^\ast \mathcal{E}\cong p'^\ast\mathcal{E}$ is also locally free on $I^nS\times T$. Since $q$ is finite and flat, so is $q'$. Hence $q'_{*}p'^{*}\mathcal{E}$ is locally free on $S^{[n]}\times T$. Since the $\mathfrak{S}_n$-invariants form a direct summand, we find that $\Psi'(\mathcal{E})$ is locally free on $S^{[n]}\times T$.

To see the isomorphism $\Psi'(\mathcal{E})_t\cong\Psi(\mathcal{E}_t)$ we consider the Cartesian diagram
\begin{equation*}
	\begin{tikzcd}
		S^{[n]}\times T & I^nS\times T\arrow[l,swap,"q'"] \arrow[r,"p'"] & S^n\times T\\
		S^{[n]}\arrow[u,hook',"\iota_{t,{S^{[n]}}}"]&  I^nS\arrow[l,swap,"q"]\arrow[r,"p"]\arrow[u,hook',"\iota_{t,I^nS}"] & S^n\arrow[u,hook',"\iota_{t,{S^n}}"]
	\end{tikzcd}
\end{equation*}
where the vertical maps are inclusions of the fibers over $t\in T$. The usual base change gives
\begin{equation*}
	\iota_{t,S^{[n]}}^\ast q'_\ast p'^\ast \mathcal{E}\cong q_\ast p^\ast \iota_{t,S^n}^\ast \mathcal{E}\cong q_\ast p^\ast \mathcal{E}_t.
\end{equation*}
Taking $\mathfrak{S}_n$-invariants then shows that $\Psi'(\mathcal{E})_t\cong \Psi(\mathcal{E}_t)$.
\end{proof}

\subsection{Generalised tautological bundles}\label{subsect:gentaut}

We apply the Bridgeland-King-Reid-Haiman equivalence to define the main objects of this paper. Given a partition $$\lambda=(\lambda_1,\dots,\lambda_k) \dashv n$$ of $n \in \mathbb{N}$ and locally free sheaves $E_1,\dots,E_k$, we consider the locally free sheaf
\begin{equation*}
	\mathbb{E} \coloneqq E_1^{\boxtimes\lambda_1}\boxtimes\dots\boxtimes E_k^{\boxtimes \lambda_k} \in \Coh(S^n).
\end{equation*}
Note that $\mathbb{E}$ is naturally a $\mathfrak{S}_\lambda$-equivariant locally free sheaf, where
\begin{equation*}
\mathfrak{S}_{\lambda} \coloneqq \mathfrak{S}_{\lambda_1}\times \dots \times\mathfrak{S}_{\lambda_k}\leqslant \mathfrak{S}_n.
\end{equation*}
Moreover we choose an irreducible $\mathfrak{S}_{\lambda_j}$-representation $W_j$ for $j=1,\dots,k$. Then
\begin{equation*}
	\mathbb{W} \coloneqq W_1 \otimes\dots\otimes W_k
\end{equation*}
is an irreducible $\mathfrak{S}_{\lambda}$-representation by \cite[p.27, Theorem 10]{serre_linear_1977}. It follows that $\mathbb{E}\otimes\mathbb{W}$ is also an $\mathfrak{S}_{\lambda}$-equivariant vector bundle. We turn it into an $\mathfrak{S}_n$-equivariant vector bundle by applying the induction functor (the both-sided adjoint of the restriction functor; see e.g.\ \cite[Sect.\ 3.2]{BOber--equi} for details)
\begin{equation}\label{eqn:Gdef}
		G_\lambda^{\mathbb{W}}(E_1,\ldots,E_k) \coloneqq 
    \Ind_{\mathfrak{S}_\lambda}^{\mathfrak{S}_n} \left( \mathbb{E}\otimes\mathbb{W}  \right) \in \Coh_{\sym_n}(S^n).
\end{equation}
Via the derived McKay correspondence \eqref{eqn:derived_McKay}, we obtain
\begin{equation}\label{eqn:Fdef}
		F_\lambda^{\mathbb{W}}(E_1,\ldots,E_k) \coloneqq \Psi\left( G_\lambda^{\mathbb{W}}(E_1,\ldots,E_k)\right) \in \Db(S^{[n]}).
\end{equation}
It follows immediately from \autoref{BKR-fam} (with $T$ being a point) that $F_\lambda^{\mathbb{W}}(E_1,\ldots,E_k)$ is a locally free sheaf.

\begin{definition}
	The locally free sheaf $F_\lambda^{\mathbb{W}}(E_1,\ldots,E_k)$ is called the \emph{generalised tautological bundle} associated to the partition $\lambda = (\lambda_1, \dots, \lambda_k)$ of $n$, the locally free sheaves $E_1, \dots, E_k$, and the representations $W_1, \dots, W_k$.
\end{definition}

\begin{remark}
	To justify the name, we point out that the classical notion of tautological bundles is a special case of the above construction. Indeed, when $\lambda = (n-1, 1)$, $E_1 = \cO_S$, $E_2 = E$, $W_1 = \mathbf{1}$ and $W_2 = \mathbf{1}$ (where $\mathbf{1}$ is the trivial representation), we obtain an isomorphism to the classical tautological bundle
	\begin{equation}\label{eq:getbacktaut}
		F_{\lambda}^\WW(\cO_S, E) \cong E^{[n]},
	\end{equation}
	which was proven in \cite[Theorem 3.6]{krug_remarks_2018} (note that, in the notation of \emph{loc.\ cit.}, $G_{(n-1,1)}^{\mathbf 1}(\reg_S,E)= \mathsf C(E)$).
\end{remark}

The above construction can also be carried out in families. For each $j = 1, \dots k$, we assume that $\cE_j$ is a locally free sheaf on $S \times T_j$, viewed as a family of locally free sheaves on $S$ parametrised by $T_j$. We denote
$$ \mathcal{T} \coloneqq T_1 \times \dots \times T_k. $$
For $i = 1, \dots, n$ and $j = 1, \dots, k$, we consider the double projection
$$ \pi_{i,j} \colon \quad S^n \times \mathcal{T} \longrightarrow S \times T_j. $$
The relative version of \eqref{eqn:Gdef} is given as
\begin{equation}\label{eqn:family-G}
	\mathcal{G}_\lambda^{\mathbb{W}}(\mathcal{E}_1,\ldots,\mathcal{E}_k) \coloneqq \Ind_{\mathfrak{S}_\lambda}^{\mathfrak{S}_n} \left( \bigotimes_{j=1}^k \left( \bigotimes_{i=\lambda_1 + \dots + \lambda_{j-1}+1}^{\lambda_1 + \dots + \lambda_j} \pi_{i,j}^\ast\mathcal{E}_j \right) \otimes \mathbb{W} \right),
\end{equation}
which can be viewed as a family of $\mathfrak{S}_n$-equivariant locally free sheaves on $S^n$ parametrized by $\mathcal{T}$, whose fiber over an arbitrary point $t = (t_1, \dots, t_k) \in \mathcal{T}$ is
\begin{equation*}
	\mathcal{G}_\lambda^{\mathbb{W}}(\mathcal{E}_1,\ldots,\mathcal{E}_k)_t\cong G_\lambda^{\mathbb{W}}(E_1,\ldots,E_k).
\end{equation*}
with $E_i\coloneqq \left(\mathcal{E}_i\right)_{t_i}$ the fiber of $\mathcal{E}_i$ over $t_i$. Moreover, we see by \autoref{BKR-fam} that
\begin{equation}\label{eqn:family-F}
	\mathcal{F}_\lambda^{\mathbb{W}}(\mathcal{E}_1,\ldots,\mathcal{E}_k) \coloneqq \Psi'(\mathcal{G}_\lambda^{\mathbb{W}}(\mathcal{E}_1,\ldots,\mathcal{E}_k))
\end{equation}
is a locally free sheaf on $S^{[n]} \times \cT$, with fiber over the point $t \in \cT$ given by
\begin{align}
	\mathcal{F}_\lambda^{\mathbb{W}}(\mathcal{E}_1,\ldots,\mathcal{E}_k)_t&=\Psi'(\mathcal{G}_\lambda^{\mathbb{W}}(\mathcal{E}_1,\ldots,\mathcal{E}_k))_t \notag\\
	&\cong \Psi(\mathcal{G}_\lambda^{\mathbb{W}}(\mathcal{E}_1,\ldots,\mathcal{E}_k)_t) \notag\\
	&\cong \Psi\left( G_\lambda^{\mathbb{W}}(E_1,\ldots,E_k)\right) \notag\\
	&=F_\lambda^{\mathbb{W}}(E_1,\ldots,E_k). \label{eqn:fiber-of-F}
\end{align}

In addition to the equivalence $\Psi$ in \eqref{eqn:derived_McKay}, there is another functor
\begin{equation}\label{eqn:stapleton}
	(\_)_{S^{n}} \colon \Coh(S^{[n]})\longrightarrow \Coh_{\mathfrak{S}_n}(S^n).
\end{equation}
defined in \cite[Section 1]{stapleton_taut_2016}, which will be called the \emph{Stapleton functor}. 
In contrast to $\Psi^{-1}$, it is not an equivalence. However, it has the advantage that it preserves subsheaves, and has some compatibility with slopes; see \cite[Lemma 1.2]{stapleton_taut_2016}. However, on generalized tautological bundles, it agrees with $\Psi^{-1}$ by the following 
\begin{lemma}\label{compare_stap}
	There exists an isomorphism 
	\begin{equation*}
		\left( F_\lambda^{\mathbb{W}}(E_1,\ldots,E_k) \right)_{S^n}\cong G_\lambda^{\mathbb{W}}(E_1,\ldots,E_k).
	\end{equation*}
\end{lemma}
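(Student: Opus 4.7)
The plan is to construct a canonical $\sym_n$-equivariant comparison morphism
\[
\alpha \colon \bigl(F_\lambda^{\mathbb{W}}(E_1,\ldots,E_k)\bigr)_{S^n} \longrightarrow G_\lambda^{\mathbb{W}}(E_1,\ldots,E_k)
\]
and to verify it is an isomorphism by a codimension argument. For brevity write $F \coloneqq F_\lambda^{\mathbb{W}}(E_1,\ldots,E_k)$ and $G \coloneqq G_\lambda^{\mathbb{W}}(E_1,\ldots,E_k)$. Recalling that the Stapleton functor is $F_{S^n} = p_* q^* F$ (with its natural $\sym_n$-equivariant structure inherited from the $\sym_n$-action on $I^nS$ along $q$) and plugging in $F = \Psi(G) = q_*^{\sym_n}(p^* G)$ gives
\[
F_{S^n} \;=\; p_*\, q^*\, q_*^{\sym_n}\, p^* G.
\]
The map $\alpha$ is then built from the counit $\epsilon \colon q^* q_*^{\sym_n} \Rightarrow \mathrm{id}$ of the adjunction $q^* \dashv q_*^{\sym_n}$ on $\sym_n$-equivariant coherent sheaves on $I^nS$, followed by the projection-formula isomorphism $p_* p^* G \cong G \otimes p_* \reg_{I^nS} \cong G$. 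The latter uses $p_* \reg_{I^nS} \cong \reg_{S^n}$, which follows from Zariski's Main Theorem since $p$ is proper birational onto the normal variety $S^n$.

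To show $\alpha$ is an isomorphism, I would restrict to the open subset $U \subset S^n$ of $n$-tuples with pairwise distinct entries. Since $\dim S \geqslant 2$, the complement of $U$ (a union of pairwise diagonals) has codimension at least $2$. Over $U$ the $\sym_n$-action is free, so $p$ restricts to an isomorphism $p^{-1}(U) \xrightarrow{\sim} U$, and $q$ restricts to an étale Galois cover with group $\sym_n$. By classical Galois descent the counit $\epsilon$ is an isomorphism on $p^{-1}(U)$, and therefore $\alpha|_U$ is an isomorphism.

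The final step is to extend $\alpha|_U$ to a global isomorphism. Here $G$ is locally free by construction, hence reflexive, and $F_{S^n}$ is reflexive on the smooth variety $S^n$: indeed, $q^*F$ is locally free on the Cohen--Macaulay variety $I^nS$ (Haiman) and $p$ is projective, so $p_*q^*F$ satisfies the depth condition required for reflexivity. Hartogs' principle for reflexive sheaves on a smooth variety then upgrades $\alpha|_U$ to an isomorphism $F_{S^n} \cong G$ on all of $S^n$, and the equivariance of $\alpha$ is preserved by the extension.

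The main technical obstacle is the reflexivity of $F_{S^n}$; while this follows from Haiman's structural results on the isospectral Hilbert scheme, it is the least routine point. A streamlined alternative route, avoiding reflexivity, uses the explicit decomposition $G = \bigoplus_{[\sigma] \in \sym_n/\sym_\lambda} \sigma^*(\mathbb{E} \otimes \mathbb{W})$ together with the compatibility of $\Psi$ and Stapleton's functor with the induction functor, reducing the claim to the case $\lambda = (n)$, where $G$ is simply $\mathbb{E} \otimes \mathbb{W}$ and the comparison can be checked directly.
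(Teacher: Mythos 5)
The paper's own proof is a two-line citation: since $\Psi$ is an equivalence, $\Psi^{-1}\bigl(F_\lambda^\WW(E_1,\dots,E_k)\bigr)=G_\lambda^\WW(E_1,\dots,E_k)$ is locally free, hence reflexive, and \cite[Lemma 3.12]{reede_zhang_22} then identifies the Stapleton functor with $\Psi^{-1}$ on any sheaf whose image under $\Psi^{-1}$ is reflexive. You are essentially reproving that cited lemma from scratch, and your skeleton --- build a comparison map from the counit of $q^*\dashv q_*^{\sym_n}$, check it is an isomorphism over the locus of pairwise distinct points, and extend across the codimension-two big diagonal using reflexivity of both sides --- is the right one. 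The gap is in the decisive third step.

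Your claim that $p_*q^*F$ is reflexive because $q^*F$ is locally free on the Cohen--Macaulay variety $I^nS$ and $p$ is projective is not a valid principle, and the conclusion is actually false here. Take $n=2$, $\lambda=(2)$, $E_1=\cO_S$, $W_1=\fa$ the sign representation. Then $G=\cO_{S^2}\otimes\fa$ and $F=\Psi(G)=\cO_{S^{[2]}}(-\delta)$, so $q^*F\cong\cO_{I^2S}(-E)$ with $E$ the exceptional divisor of the blow-up $p\colon I^2S\to S^2$ of the diagonal, and $p_*q^*F\cong\cI_{\Delta}$, the ideal sheaf of a codimension-two subvariety: torsion-free, not reflexive, and not isomorphic to $G=\cO_{S^2}$. (The same failure appears already for $p_*\cO(-E)=\mathfrak{m}_0$ on the blow-up of a point in a surface; properness plus local freeness upstairs does not give the depth you need, because the exceptional divisor has codimension one.) Indeed, \autoref{lem:KPses} in this paper exhibits a generally non-zero cokernel $\cC$ of the counit $q^*q_*^{\sym_n}\cG\to\cG$, which is exactly why $p_*q^*F\subsetneq G$ in general. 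So with the Stapleton functor read literally as $p_*q^*$ the lemma itself would be false; the functor of \cite[Section 1]{stapleton_taut_2016} has to be taken with the reflexive hull (equivalently, as the pushforward from the complement of the big diagonal) built in, at which point reflexivity of $F_{S^n}$ holds by construction and your depth argument is both unnecessary and, as stated, incorrect. Your proposed alternative route does not repair this: reducing to $\lambda=(n)$ leaves exactly the same comparison to be made, and the counterexample above is already of that form. The clean fixes are either to cite \cite[Lemma 3.12]{reede_zhang_22} as the paper does, or to use the correct (reflexivized) definition of $(\_)_{S^n}$ and then conclude as you do from the isomorphism over the open part together with reflexivity of both sides.
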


\begin{proof}
	Since $\Psi$ is an equivalence, it follows from \eqref{eqn:Fdef} that
	\begin{equation*}
		\Psi^{-1} \left( F_\lambda^{\mathbb{W}}(E_1,\ldots,E_k) \right )= G_\lambda^{\mathbb{W}}(E_1,\ldots,E_k).
	\end{equation*}	
	Since $G_\lambda^{\mathbb{W}}(E_1,\ldots,E_k)$ is locally free, hence in particular reflexive, \cite[Lemma 3.12]{reede_zhang_22} shows that there is an isomorphism
	\begin{equation*}
		\left(F_\lambda^{\mathbb{W}}(E_1,\ldots,E_k) \right) _{S^n}\cong\Psi^{-1}\left(F_\lambda^{\mathbb{W}}(E_1,\ldots,E_k) \right) =G_\lambda^{\mathbb{W}}(E_1,\ldots,E_k).\qedhere
	\end{equation*}
\end{proof}

\section{Stability}\label{sect:stab}

In this section, we will show that the generalised tautological bundles associated to stable bundles on the surface $S$ are stable under a suitable polarization on the Hilbert scheme $S^{[n]}$. The bridge relating the stability on $S$ and $S^{[n]}$ is the equivariant stability on $S^n$.

\subsection{Equivariant stability of box products}

First of all, we introduce the notion of equivariant stability. For simplicity we restrict ourselves to slope stability.

\begin{definition}
	Assume that a finite group $G$ acts on a polarized variety $(X,H)$, which means that $G$ acts on $X$, and $H$ is $G$-invariant. A $G$-equivariant torsion free sheaf $E$ on $X$ is said to be $G$-equivariantly slope stable with respect to $H$ if
	\begin{equation*}
		\mu_H(F)<\mu_H(E)
	\end{equation*}
    for every $G$-equivariant subsheaf $F$ with $0<\rk(F)<\rk(E)$.
\end{definition}

Note that equivariant stability is a weaker notion than (the ordinary) stability since we only need to test the slopes of equivariant subsheaves instead of arbitrary ones. The following result, inspired by \cite[Lemma 1.3]{stapleton_taut_2016}, establishes the equivariant stability of an induced equivariant sheaf from the ordinary stability of the original sheaf.

\begin{lemma}\label{ind_stable}
Let $G$ be a finite group acting on a polarized variety $(X,H)$, $G'\leqslant G$ a subgroup, and $G' \backslash G$ the set of right cosets of $G'$ in $G$. Assume that $E$ is an $G'$-equivariant locally free sheaf, which is slope stable with respect to $H$, such that 
\begin{equation}\label{eq:Eassumption}
g^{*}E\not\cong E \quad\text{for every } [g]\in G'\backslash G \text{ with } [g] \neq [\id].
\end{equation} 
Then for every irreducible $G'$-representation $W$, the $G$-equivariant torsion free sheaf $F=\Ind_{G'}^G(E\otimes W)$ is $G$-equivariantly slope stable with respect to $H$.
\end{lemma}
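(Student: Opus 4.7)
The plan is to reduce $G$-equivariant slope stability of $F$ to the polystable structure of its underlying non-equivariant sheaf. First I would identify
\[
F \;\cong\; \bigoplus_{[g]\in G'\backslash G} g^{*}E \otimes W
\]
as an ordinary coherent sheaf on $X$, each summand being isomorphic to $(g^{*}E)^{\oplus \dim W}$. Since $H$ is $G$-invariant, slope stability is preserved under every pullback $g^{*}$, so each $g^{*}E$ is slope stable with $\mu_H(g^{*}E) = \mu_H(E)$. Combined with hypothesis \eqref{eq:Eassumption}, the stable summands are pairwise non-isomorphic and of equal slope, which makes $F$ polystable. In particular $\mu_H(F') \leq \mu_H(F) = \mu_H(E)$ for every subsheaf $0 \neq F' \subset F$.

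Let $F' \subset F$ now be $G$-equivariant with $0 < \rk F' < \rk F$, and suppose for contradiction that $\mu_H(F') = \mu_H(F)$. Then the saturation $\tilde{F}'$ of $F'$ inside $F$ is polystable of the same maximal slope. Using that $\Hom(g^{*}E, h^{*}E) = 0$ for $[g] \neq [h]$ (by simplicity of stables together with the non-isomorphism assumption) and $\Hom(g^{*}E, g^{*}E) = \kk$, each stable subsheaf of $F$ of slope $\mu_H(E)$ must sit inside a single summand $g^{*}E \otimes W$ as $g^{*}E \otimes \kk w$ for some $w \in W$. Consequently
\[
\tilde{F}' \;=\; \bigoplus_{[g]\in G'\backslash G} g^{*}E \otimes W_{[g]}
\]
for certain linear subspaces $W_{[g]} \subseteq W$.

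Finally I would exploit the $G$-equivariance of $\tilde{F}'$. Under right multiplication, $G$ acts transitively on the cosets $G'\backslash G$, and the stabiliser of $[\id]$ is exactly $G'$, which preserves the summand $E \otimes W$ by acting on the $E$ factor via its $G'$-equivariant structure and on $W$ via the given representation. Comparing $\tilde{F}'$ with its pullbacks $k^{*}\tilde{F}'$ for $k \in G$ shows first, using transitivity, that all $W_{[g]}$ coincide with a single subspace $V \subseteq W$, and second, using the $G'$-action at $[\id]$, that $V$ is a $G'$-subrepresentation of $W$. Irreducibility then forces $V \in \{0, W\}$, yielding either $\tilde{F}' = 0$ or $\tilde{F}' = F$, both of which contradict $0 < \rk F' = \rk \tilde{F}' < \rk F$.

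The main obstacle I anticipate is the decomposition step in the second paragraph and tracking how the $G$-equivariance interacts with the coset indexing. The isotypic decomposition of polystable subsheaves in characteristic zero is standard, but keeping consistent conventions for left versus right cosets and for the identity $k^{*}(g^{*}E) = (gk)^{*}E$ is where careful bookkeeping is required; a convention slip here would obscure why the single non-isomorphism hypothesis \eqref{eq:Eassumption} suffices to collapse every $W_{[g]}$ to the same subspace of $W$.
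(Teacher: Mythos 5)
Your argument is correct and follows essentially the same route as the paper: both reduce to the fact that $F$ is polystable with pairwise non-isomorphic stable factors $g^{*}E$ (using the $G$-invariance of $H$ and hypothesis \eqref{eq:Eassumption}), identify a $G$-equivariant subsheaf of equal slope with a direct summand built from copies of the $g^{*}E$, and then use $G$-equivariance together with the irreducibility of $W$ to force the multiplicity to be $\dim W$. The only cosmetic differences are that the paper invokes the Krull--Schmidt property to obtain the multiplicities where you use the isotypic decomposition of the saturation, and it extracts the $G'$-subrepresentation by applying $\Hom(E,\_)$ to an embedding $E^{\oplus s}\hookrightarrow E\otimes W$; the coset-bookkeeping issue you flag is harmless, since only $W_{[\id]}$ needs to be a $G'$-subrepresentation while the remaining $W_{[g]}$ need only have the same dimension.
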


\begin{proof}
Let $r \coloneqq \dim W$, $t \coloneqq [G:G']$, and let $g_1=\id, g_2, \dots, g_t$ be a set of representatives of the right cosets of $G'$ in $G$. Then, for the underlying (non-equivariant) sheaf, we have the direct sum decomposition
\[
F\cong E^{\oplus r}\oplus g_2^*E^{\oplus r} \oplus \dots \oplus g_t^*E^{\oplus r}.
\]
By the $G$-invariance of $H$, all the $g_j^*E$ are still stable. Hence, $F$ is polystable with $\mu_H(F)=\mu_H(E)$. Hence, it suffices to prove that the only $G$-equivariant subsheaf $0\neq U\subseteq F$ with $\mu_H(U)=\mu_H(F)$ is $U=F$. By \cite[Cor.\ 1.6.11]{huybrechts_moduli}, such $U$ is a direct summand of $F$. 
By the Krull--Schmidt property of $\Coh(X)$ (see \cite[Thm.\ 2]{Atiyah--KS}), it follows that 
\[
U\cong E^{\oplus s_1}\oplus g_2^*E^{\oplus s_2}\oplus\dots \oplus g_t^*E^{\oplus s_t}
\]
where $0\leqslant s_i\leqslant r$ for each $i= 1, \dots, t$. By the $G$-equivariance of $U$, together with the fact that $g_i^*E$'s are pairwise non-isomorphic by \eqref{eq:Eassumption}, we have $s_1=s_2=\dots=s_t$, denoted as $s$. Note that $s>0$ since $U\neq 0$. Again, by assumption \eqref{eq:Eassumption} together with the fact that all $g_i^*E$'s are stable with the same slope, we have
\begin{equation}\label{eq:Homvanish}
\Hom(E, g_i^*E)=0 \quad \text{for} \quad j=2,\dots, t.    
\end{equation}
As $G'$-equivariant sheaves, the composition of embeddings
$E^{\oplus s}\hookrightarrow U\hookrightarrow F$ factors through $E\otimes W\hookrightarrow F$, so we have an embedding 
$E^{\oplus s}\hookrightarrow E\otimes W$ of $G'$-equivariant sheaves. As $E$ is stable, hence simple, applying $\Hom(E,\_)$ to this embedding gives an embedding of some $s$-dimensional $G'$-subrepresentation of $W$. The irreducibility of $W$ implies that $s=\dim W=r$. Hence $U=F$.
\end{proof}

We apply the above criterion to the specific case of our interest. To fix notations, let $H$ be an ample line bundle on $S$. Recall that $H$ induces an ample line bundle
\begin{equation*}
	H_{S^n} \coloneqq \tau_1^\ast H + \dots + \tau_n^\ast H
\end{equation*}
on $S^n$, where $\tau_1, \dots, \tau_n$ are the projections from $S^n$ to its factors. This line bundle descends to an ample line bundle $H_{S^{(n)}}$ on $S^{(n)}$. By pulling it back along the Hilbert-Chow morphism $\mu$, we obtain a semi-ample line bundle
\begin{equation*}
H_{S^{[n]}} \coloneqq \mu^{*}H_{S^{(n)}}
\end{equation*}
on $S^{[n]}$. We recall that this construction of $H_{S^{[n]}}$ is the inclusion of the first direct summand in the decomposition
	\begin{equation*}
		\NS(S^{[n]})\cong \left( \NS(S)\right) _{S^{[n]}}\oplus \NS(\operatorname{Alb}(S))\oplus \mathbb{Z}\delta,
	\end{equation*}
where $\delta$ is the diagonal divisor in $S^{[n]}$; this follows from \cite{Fog--Pic}, see \cite[Section 2]{Gira} for details. Then we have

\begin{proposition}\label{prop:box-stability}
    Assume that $E_1, \dots, E_k$ are pairwise non-isomorphic slope stable vector bundles on $S$ with respect to $H$. Then $G_\lambda^\WW(E_1, \dots, E_k)$ is an $\sym_n$-equivariantly slope stable vector bundle on $S^n$ with respect to $H_{S^n}$.
\end{proposition}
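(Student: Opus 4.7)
The plan is to apply \autoref{ind_stable} with $X=S^n$, $H = H_{S^n}$, $G = \sym_n$, $G' = \sym_\lambda$, the $\sym_\lambda$-equivariant locally free sheaf $E = \mathbb{E}$, and the irreducible $\sym_\lambda$-representation $W = \mathbb{W}$. Since the resulting induced sheaf is exactly $G_\lambda^{\mathbb{W}}(E_1,\dots,E_k)$ by \eqref{eqn:Gdef}, the conclusion of that lemma will give precisely the required $\sym_n$-equivariant slope stability, and the proof reduces to checking the two hypotheses of \autoref{ind_stable}.

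The first hypothesis is that $\mathbb{E} = E_1^{\boxtimes \lambda_1}\boxtimes\dots\boxtimes E_k^{\boxtimes \lambda_k}$ is slope stable (non-equivariantly) on $(S^n, H_{S^n})$. The polarization $H_{S^n} = \sum_{i=1}^n \tau_i^{*}H$ is precisely of the additive form for which the classical theorem on external products applies: the box product of slope stable torsion-free sheaves on the factors is slope stable on the product with respect to the sum of pulled-back polarizations. In characteristic zero this is a consequence of the Ramanan--Ramanathan theorem on stability of tensor products, or equivalently of the Kobayashi--Hitchin correspondence, under which the $E_j$ correspond to irreducible Hermite--Einstein connections and the box product corresponds to the external tensor of their pullbacks, which remains irreducible.

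The second hypothesis is that $g^{*}\mathbb{E} \not\cong \mathbb{E}$ for every $[g] \in \sym_\lambda\backslash \sym_n$ with $[g]\neq[\id]$. Write $\mathbb{E} = F_1\boxtimes\dots\boxtimes F_n$ with $F_i = E_{\alpha(i)}$, where the index function $\alpha\colon \{1,\dots,n\} \to \{1,\dots,k\}$ is constant equal to $j$ on the $j$-th block $\{\lambda_1+\dots+\lambda_{j-1}+1, \dots, \lambda_1+\dots+\lambda_j\}$ of the partition. The permutation action of $\sym_n$ on $S^n$ satisfies $\sigma^{*}(F_1\boxtimes\dots\boxtimes F_n) \cong F_{\sigma(1)}\boxtimes\dots\boxtimes F_{\sigma(n)}$; hence $\sigma^{*}\mathbb{E}\cong \mathbb{E}$ iff $E_{\alpha(\sigma(i))}\cong E_{\alpha(i)}$ for every $i$, which by the pairwise non-isomorphism of $E_1,\dots,E_k$ forces $\alpha\circ\sigma = \alpha$, i.e., $\sigma\in\sym_\lambda$. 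Both hypotheses of \autoref{ind_stable} are thus satisfied, and the proposition follows.

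The main technical obstacle is the first verification: establishing slope stability, rather than merely polystability, of $\mathbb{E}$ with respect to $H_{S^n}$. When some $\lambda_j > 1$, the bundle $E_j$ appears repeatedly as a box factor, and although subsheaves of pure box-product form $F_1\boxtimes\dots\boxtimes F_n$ are ruled out by a direct slope computation using $\mu_H(F_i)\leqslant \mu_H(E_{\alpha(i)})$ together with strict inequality in at least one factor, destabilising subsheaves of non-product form require the full strength of the classical box-product (or tensor-product) stability theorem in characteristic zero, which is the key input invoked here.
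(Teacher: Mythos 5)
Your proposal is correct and follows essentially the same route as the paper: both reduce to \autoref{ind_stable} with $G=\sym_n$, $G'=\sym_\lambda$, and both take as the key external input the slope stability of the box product $\mathbb{E}$ with respect to the sum polarization $H_{S^n}$ (the paper cites \cite[Lemma 4.7]{ogrady_rigid} for this, which is the same classical fact you invoke). The only difference is in verifying $g^*\mathbb{E}\not\cong\mathbb{E}$ for $[g]\neq[\id]$: the paper proves the stronger statement $\Hom(\mathbb{E},g^*\mathbb{E})=0$ via the K\"unneth formula and a slope comparison, whereas you deduce non-isomorphism from the claim that an isomorphism of box products forces factorwise isomorphisms $F_i\cong F_{g(i)}$ — this is true for stable (hence indecomposable) bundles, e.g.\ by restricting to fibres and using Krull--Schmidt, but you should say a word to justify it, since it is not automatic for arbitrary sheaves.
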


\begin{proof}
    By \cite[Lemma 4.7]{ogrady_rigid}, we see that the $\fS_\lambda$-equivariant vector bundle $\mathbb{E}=E_1^{\boxtimes \lambda_1}\boxtimes \dots \boxtimes E_k^{\boxtimes \lambda_k}$ is slope stable with respect to $H_{S^n}$. To apply \autoref{ind_stable} and show that $G_\lambda^\WW(E_1, \dots, E_k)$ is $\sym_n$-equivariantly slope stable, it suffices to show that $\EE \not\cong g^\ast \EE$ for each right coset $[g] \in \sym_\lambda \backslash \sym_n$ with $[g] \neq [\id]$. 
    
    We define $F_1,\dots, F_n$ by 
    \[F_i \coloneqq E_j\quad \text{for}\quad \lambda_1+\dots +\lambda_{j-1}< i \leqslant \lambda_1+\dots +\lambda_{j-1}+\lambda_j,\]
    then we can write $\EE= F_1\boxtimes \dots \boxtimes F_n$. By Künneth formula we have
    $$ \Hom(\EE, g^\ast \EE) \cong \bigotimes_{i=1}^n \Hom(F_i, F_{g(i)}). $$ Since $[g] \in \sym_\lambda \backslash \sym_n$ and $[g] \neq [\id]$, we can find $i$'s such that $F_i \not\cong F_{g(i)}$, among which there must be some $i$ satisfying $\mu_H(F_i) \geqslant \mu_H(F_{g(i)})$. It follows from stability that $\Hom(F_i, F_{g(i)}) = 0$, hence $\Hom(\EE, g^\ast\EE)=0$; in particular $\EE \not\cong g^\ast\EE$, as desired.
\end{proof}

\begin{remark}\label{rem:counterexamplestab}
The assumption that $E_j$'s are pairwise non-isomorphic is indeed necessary, otherwise, there is always some equivariant subsheaf of $G_\lambda^\WW(E_1, \dots, E_k)$ with the same slope. For example, for $\lambda_1 + \lambda_2 = n$, we have that $$G_{(n)}^{\mathbf 1}(E)=E^{\boxtimes n}$$ is an equivariant subsheaf of 
\[G_{(\lambda_1,\lambda_2)}^{\mathbf 1}(E,E)\cong \bigoplus_{[g]\in (\sym_{\lambda_1}\times\sym_{\lambda_2})\backslash \sym_{n}}g^*\bigl(E^{\boxtimes \lambda_1}\boxtimes E^{\boxtimes \lambda_2}\bigr) \cong \bigl( E^{\boxtimes n}\bigr)^{\oplus \frac{n!}{\lambda_1!\lambda_2!}} \]
via the diagonal embedding, and both sheaves have the same slope.     
\end{remark}

\subsection{Stability of generalised tautological bundles}

We produce slope stable bundles on $S^{[n]}$ in two steps. First of all, we prove slope stability of generalised tautological bundles with respect to the semi-ample line bundle $H_{S^{[n]}}$. Then we deform it to a nearby ample line bundle.

Slope stability with respect to $H_{S^{[n]}}$ is closely related to $\mathfrak{S}_n$-equivariant slope stability with respect to $H_{S^n}$ by the following result

\begin{lemma}\label{gamma_stable}
	Assume $E$ is a torsion free sheaf on $S^{[n]}$ such that $(E)_{S^n}$ is $\mathfrak{S}_n$-equivariantly slope stable with respect to $H_{S^n}$ on $S^n$, then $E$ is slope stable with respect to $H_{S^{[n]}}$.
\end{lemma}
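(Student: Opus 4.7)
The plan is to argue by contradiction, transferring a hypothetical destabilising subsheaf of $E$ on $S^{[n]}$ to a destabilising $\mathfrak{S}_n$-equivariant subsheaf of $(E)_{S^n}$ on $S^n$ via the Stapleton functor. So I would begin by assuming that there exists a subsheaf $F\subseteq E$ with $0<\rk(F)<\rk(E)$ and $\mu_{H_{S^{[n]}}}(F)\geqslant \mu_{H_{S^{[n]}}}(E)$, and then aim to derive a contradiction to the $\mathfrak{S}_n$-equivariant stability of $(E)_{S^n}$.

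The tool is exactly the collection of formal properties of the Stapleton functor collected in \cite[Lemma 1.2]{stapleton_taut_2016}, already cited in the discussion right above the lemma. I would invoke three things in sequence. First, $(\_)_{S^n}$ preserves subsheaves, so applying it to $F\hookrightarrow E$ yields an $\mathfrak{S}_n$-equivariant inclusion $(F)_{S^n}\hookrightarrow (E)_{S^n}$. Second, the functor multiplies ranks by $|\mathfrak{S}_n|=n!$, hence the strict inequality $0<\rk(F)<\rk(E)$ lifts to $0<\rk((F)_{S^n})<\rk((E)_{S^n})$, so $(F)_{S^n}$ is an allowed test subsheaf for equivariant stability. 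Third, and most crucially, the slope transforms by an order-preserving affine function: there exist constants $\alpha>0$ and $\beta$ (depending only on $n$ and the polarisations) such that $\mu_{H_{S^n}}((\,\cdot\,)_{S^n})=\alpha\cdot \mu_{H_{S^{[n]}}}(\,\cdot\,)+\beta$, so the assumed inequality on $S^{[n]}$ translates to $\mu_{H_{S^n}}((F)_{S^n})\geqslant \mu_{H_{S^n}}((E)_{S^n})$, contradicting the equivariant slope stability of $(E)_{S^n}$.

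The main conceptual point, and the one I would want to check most carefully, is the slope comparison in item three, because $H_{S^{[n]}}$ is only semi-ample (being the pullback of the ample $H_{S^{(n)}}$ along the Hilbert--Chow morphism $\mu$), so one must make sense of $\mu_{H_{S^{[n]}}}$ as a slope function and verify that the affine comparison with $\mu_{H_{S^n}}$ continues to hold. This is precisely the content of Stapleton's compatibility lemma, set up so that the inclusion of Néron--Severi summands $\NS(S)_{S^{[n]}}\hookrightarrow \NS(S^{[n]})$ is compatible with the product polarisation class $H_{S^n}$ pulled back via $p$. Everything else is formal: subsheaves go to subsheaves, ranks scale by a positive integer, and the reductio produces the required contradiction.
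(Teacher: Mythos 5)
Your proof is correct and is essentially the paper's own argument run contrapositively: both transfer a subsheaf $F\hookrightarrow E$ through the Stapleton functor to an $\mathfrak{S}_n$-equivariant subsheaf of $(E)_{S^n}$ and conclude via the slope compatibility of \cite[Lemma 1.2]{stapleton_taut_2016}. One small inaccuracy: the Stapleton functor preserves rank rather than multiplying it by $n!$ (compare $\rk F_\lambda^{\mathbb{W}} = \rk G_\lambda^{\mathbb{W}}$ in \autoref{compare_stap}), but since either scaling preserves the strict inequalities $0<\rk<\rk E$, this does not affect the argument.
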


\begin{proof}
	Let $F\hookrightarrow E$ be a saturated subsheaf. Using the Stapleton functor \eqref{eqn:stapleton}, this induces an $\mathfrak{S}_n$-invariant subsheaf 
	\begin{equation*}
		(F)_{S^n} \hookrightarrow (E)_{S^n}.
	\end{equation*}
	But $(E)_{S^n}$ is $\mathfrak{S}_n$-equivariantly slope stable, hence
	\begin{equation*}
		\mu_{H_{S^n}}((F)_{S^n})<\mu_{H_{S^n}}((E)_{S^n}),
	\end{equation*}
    which is equivalent to the condition $\mu_{H_{S^{[n]}}}(F)<\mu_{H_{S^{[n]}}}(E)$ by \cite[Lemma 1.2]{stapleton_taut_2016}. It follows that $E$ is $H_{S^{[n]}}$-stable.
\end{proof}

\begin{proposition}\label{thm:Fstable}
	Assume that the locally free sheaves $E_1,\dots,E_k$ on $S$ are slope stable with respect to the ample line bundle $H$ and pairwise non-isomorphic, then the locally free sheaf $F_\lambda^{\mathbb{W}}(E_1,\ldots,E_k)$ on $S^{[n]}$ is slope stable with respect to $H_{S^{[n]}}$.
\end{proposition}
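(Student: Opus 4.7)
The statement is essentially a combination of the three preceding results, which have been set up precisely to yield it by concatenation. My plan is to simply chain them together.

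First, I would apply \autoref{compare_stap} to identify the Stapleton transform of the generalised tautological bundle, namely
\[
\left( F_\lambda^\WW(E_1,\dots,E_k) \right)_{S^n} \cong G_\lambda^\WW(E_1,\dots,E_k).
\]
This reduces the stability question on $S^{[n]}$ to an equivariant stability question on $S^n$, provided one has the hypothesis of \autoref{gamma_stable} at one's disposal.

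Next, I would invoke \autoref{prop:box-stability}, whose hypotheses match those of the proposition exactly: the $E_j$'s are pairwise non-isomorphic slope stable locally free sheaves on $S$ with respect to $H$. This gives that $G_\lambda^\WW(E_1,\dots,E_k)$ is $\sym_n$-equivariantly slope stable on $S^n$ with respect to $H_{S^n}$.

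Combining with the isomorphism above, $(F_\lambda^\WW(E_1,\dots,E_k))_{S^n}$ is $\sym_n$-equivariantly slope stable with respect to $H_{S^n}$. Applying \autoref{gamma_stable} then yields that $F_\lambda^\WW(E_1,\dots,E_k)$ is slope stable with respect to $H_{S^{[n]}}$, which is the desired conclusion. There is no genuine obstacle here, since all the technical work has been done in the lemmas: the subtlety about pairwise non-isomorphic summands is handled inside \autoref{prop:box-stability} (via \autoref{ind_stable}), the compatibility between Stapleton's functor and $\Psi^{-1}$ on reflexive sheaves is handled in \autoref{compare_stap}, and the comparison of slopes on $S^n$ and $S^{[n]}$ is handled in \autoref{gamma_stable} using \cite[Lemma 1.2]{stapleton_taut_2016}. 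The proof therefore amounts to chaining three references.
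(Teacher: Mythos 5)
Your proof is correct and follows exactly the same route as the paper: the paper's own proof cites \autoref{compare_stap} and \autoref{gamma_stable}, with \autoref{prop:box-stability} supplying the equivariant stability hypothesis (you simply make that last step explicit). Nothing further is needed.
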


\begin{proof}
	 By \autoref{compare_stap} there is an isomorphism
	 \begin{equation*}
	 	\left(F_\lambda^{\mathbb{W}}(E_1,\ldots,E_k) \right) _{S^n}\cong G_\lambda^{\mathbb{W}}(E_1,\ldots,E_k).
	 \end{equation*}
	  Thus $F_\lambda^{\mathbb{W}}(E_1,\ldots,E_k)$ is slope stable with respect to $H_{S^{[n]}}$ by \autoref{gamma_stable}.
\end{proof}

We will deform $H_{S^{[n]}}$ to a nearby ample line bundle. For the ample line bundle to be independent of choices of $E_1, \dots, E_k$, we allow them to vary in families, and assume that their moduli spaces $M_1, \dots, M_k$ satisfy the following conditions

\begin{condition}
	\label{cond:moduli_dist}
	Given an ample line bundle $H$ on $S$,
	\begin{itemize}
		\item $M_j$ parametrizes \emph{locally free} sheaves on $S$ which are \emph{slope stable} with respect to $H$ for all $1 \leqslant j  \leqslant k$;
		\item $E_i \not\cong E_j$ for all $1 \leqslant i \neq j \leqslant k$, $[E_i] \in M_i$ and $[E_j] \in M_j$.
	\end{itemize}
\end{condition}

\begin{remark}\label{rmk:another-counter}
	Due to \autoref{rem:counterexamplestab}, we see that the second item in \autoref{cond:moduli_dist} is indeed necessary for our purpose. However, it is easy to be satisfied without changing the underlying moduli spaces $M_i$ (in other words, we can even achieve it if $M_i=M_j$ for some $i\neq j$). Recall in general that if $\mathcal F$ is a family of sheaves on $S$ parametrized by $M$, after tensoring the each of them with a fixed line bundle, the new class of sheaves are still parametrized by $M$. Therefore, by tensoring the sheaves parametrized by $M_j$'s with appropriate line bundles (for example, powers of $H$), the second item in \autoref{cond:moduli_dist} can always be achieved.
\end{remark}

The following theorem finally establishes the stability of generalised tautological bundles with respect to ample classes.

\begin{theorem}\label{same_div}
	Let $M_1, \dots, M_k$ be moduli spaces of $H$-stable sheaves on $S$ satisfying \autoref{cond:moduli_dist}. Then there exists an ample class $\tilde{H} \in \NS(S^{[n]})$ such that $F_\lambda^{\mathbb{W}}(E_1,\ldots,E_k)$ is $\mu_{\tilde{H}}$-stable for all $([E_1],\dots,[E_k])\in M_1\times\dots\times M_k$ simultaneously.
\end{theorem}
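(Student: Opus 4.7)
My plan is to combine \autoref{thm:Fstable} (which already gives slope stability of $F = F_\lambda^{\mathbb{W}}(E_1,\dots,E_k)$ with respect to the semi-ample class $H_{S^{[n]}}$) with a standard openness-of-stability argument in order to deform $H_{S^{[n]}}$ to a nearby ample class. There are two crucial uniform inputs. First, the collection $\{F_\lambda^{\mathbb{W}}(E_1,\dots,E_k) : ([E_j])\in\cM\}$ with $\cM := M_1\times\dots\times M_k$ is a bounded family on $S^{[n]}$ with a single Chern character $v$; boundedness follows from the relative construction \eqref{eqn:family-F}, and constancy of $\ch(F)$ follows from that of the $\ch(E_j)$ on each $M_j$ together with the functorial nature of the construction. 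Second, $\tilde H_t := H_{S^{[n]}} - t\delta$ is ample on $S^{[n]}$ for every $t$ in some interval $(0, t_0)$ with $t_0 > 0$ depending only on $H$, which follows from the Fogarty description of $\Pic(S^{[n]})$ combined with the fact that $-\delta$ is relatively ample for the Hilbert--Chow morphism $\mu$. The goal then becomes to find a sufficiently small $t \in (0, t_0)$ such that every $F$ in the family is $\tilde H_t$-slope stable.

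To produce such a $t$, I would fix the specific ample class $H^\circ := \tilde H_{t_0/2}$ and apply Grothendieck's boundedness lemma \cite[Lemma~1.7.9]{huybrechts_moduli} to the bounded family $\{F\}_\cM$: for any real constant $C$, the saturated subsheaves $F'\subset F$ (as $F$ ranges over $\cM$) satisfying $\mu_{H^\circ}(F')\geqslant C$ form themselves a bounded family over $\cM$, so that only finitely many Chern characters $v'$ can be realised. For $t$ in the compact subinterval $[0, t_0/2]$, destabilisation of $F$ by some $F'$ with respect to $\tilde H_t$ forces $\mu_{H^\circ}(F')$ to lie above a uniform constant, by expanding $\tilde H_t^{2n-1}$ in powers of $t$ and using that $\mu_{\tilde H_t}(F)$ and $\mu_{H^\circ}(F)$ are uniformly bounded on $\cM$. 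This reduces the discussion to a single finite list $v'_1,\dots,v'_N$ of potential destabiliser Chern characters.

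For each such $v'_i$, the polynomial function
\[
\eta_i(t) := \bigl(\rk(v)\cdot c_1(v'_i) - \rk(v'_i)\cdot c_1(v)\bigr)\cdot \tilde H_t^{2n-1}
\]
in $t$ satisfies $\eta_i(0) < 0$, since strict $H_{S^{[n]}}$-stability of every $F$ in the family (\autoref{thm:Fstable}) applies to any actual saturated subsheaf whose Chern character is some $v'_i$. By continuity there exists $\epsilon_i > 0$ with $\eta_i(t) < 0$ on $[0, \epsilon_i)$, so that setting $\tilde H := \tilde H_t$ for any $t < \min(\epsilon_1,\dots,\epsilon_N, t_0)$ yields an ample class against which every $F$ in the family is slope stable. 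I expect the main technical obstacle to lie precisely in the reduction to a finite list $v'_1,\dots,v'_N$ of Chern characters: the strict stability is only known for the semi-ample class $H_{S^{[n]}}$, so Grothendieck's lemma cannot be invoked directly with this polarisation, and one must reconcile the auxiliary ample class $H^\circ$ used for boundedness with the one-parameter family $\tilde H_t$ converging to the boundary of the ample cone.
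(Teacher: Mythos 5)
Your overall strategy coincides with the paper's: establish $H_{S^{[n]}}$-stability via \autoref{thm:Fstable}, use the relative construction \eqref{eqn:family-F} to get boundedness of the family $F_\lambda^{\WW}(E_1,\dots,E_k)$ uniformly over $\cM$, reduce to finitely many candidate destabilising classes, and then perturb the polarisation. However, the step you yourself flag as the ``main technical obstacle'' is a genuine gap, not a technicality. You apply Grothendieck's boundedness lemma at the interior ample class $H^\circ=\tilde H_{t_0/2}$ and then claim that $\tilde H_t$-destabilisation for $t\in[0,t_0/2]$ forces $\mu_{H^\circ}(F')$ above a uniform constant. But the two slopes are values of two different linear functionals $c_1(F')\mapsto c_1(F')\cdot\beta$ (with $\beta=\tilde H_t^{2n-1}$ resp.\ $(H^\circ)^{2n-1}$); expanding $\tilde H_t^{2n-1}$ in powers of $t$ produces correction terms $c_1(F')\cdot H_{S^{[n]}}^{a}\,\delta^{b}$ which are not a priori bounded over all saturated subsheaves $F'$ (the classes $H_{S^{[n]}}^{a}\delta^{b}$ with $b\geqslant 1$ are not nef or movable, so no $\mu_{\max}/\mu_{\min}$-type bound applies). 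Hence a lower bound on the $\tilde H_t$-slope does not transfer to a lower bound on the $H^\circ$-slope; controlling those mixed intersection numbers is essentially equivalent to the finiteness of the set of $c_1(F')$ that you are trying to prove, so the reduction is circular as written.

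The paper closes exactly this gap by a different mechanism. It first produces a single locally free sheaf $V$ with surjections $V \twoheadrightarrow F_\lambda^{\WW}(E_1,\dots,E_k)^{\vee}$ for all $m\in\cM$ (via \cite[Lemma 1.7.6]{huybrechts_moduli}, and via quasi-universal families when the $M_j$ are not fine --- a case your proposal silently assumes away), so that every potential destabiliser is a subsheaf of the fixed bundle $V^{\vee}$. It then invokes \cite[Theorem 2.29]{greb_movable_2016}: for a \emph{big and movable} curve class $\beta$, the set $\{c_1(G)\mid G\subseteq V^{\vee},\ \mu_\beta(G)\geqslant c\}$ is finite. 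Since $H_{S^{[n]}}$ is nef and big, the class $H_{S^{[n]}}^{2n-1}$ is itself big and movable, so the finiteness is obtained directly at the boundary class where strict stability is already known from \autoref{thm:Fstable}; no reconciliation between an interior polarisation and the degenerate one is needed. Your final perturbation step (the polynomials $\eta_i(t)$ with $\eta_i(0)<0$) is sound once a finite list is in hand; replacing your use of the classical Grothendieck lemma at $H^\circ$ by the Greb--Kebekus--Peternell finiteness at $\beta=H_{S^{[n]}}^{2n-1}$ applied to subsheaves of $V^{\vee}$ would repair the argument and essentially reproduce the proof of \cite[Theorem 2.8]{reede_zhang_22} that the paper follows.
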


\begin{proof}
	For simplicity we denote 
	$$\cM = M_1 \times \dots \times M_k \qquad \text{and} \qquad m = ([E_1],\dots,[E_k])\in \cM.$$
	We observe that \autoref{cond:moduli_dist} guarantees that \autoref{thm:Fstable} holds for all $m \in \cM$. It remains to deform the semi-ample line bundle $H_{S^{[n]}}$ to the interior of the ample cone in such a way that all $F_\lambda^\WW(E_1,\dots, E_k)$ remain slope stable. The proof is essentially the same as that of \cite[Theorem 2.8]{reede_zhang_22}, in which the language of stability with respect to curve classes is used. We refer readers to \cite[Section 2.2]{greb_movable_2016} for the precise definition of this notion.

	More precisely, in the proof of \cite[Theorem 2.8]{reede_zhang_22}, we just need to replace the sheaf $E_x$ by $F_\lambda^{\mathbb{W}}(E_1,\ldots,E_k)$ and the surface $X$ by the product of moduli space $\cM$. We claim that there exists some auxiliary locally free sheaf $V \in \Coh(S^{[n]})$, such that there exists a surjective map
	\begin{equation}\label{eqn:same-V}
		V \twoheadrightarrow F_\lambda^{\mathbb{W}}(E_1,\ldots,E_k)^\vee
	\end{equation}
	for each $m \in \cM$.
	
	To prove this claim, we first assume that $M_j$ is a fine moduli space, or equivalently, a universal family $\cE_j \in \Coh(S \times M_j)$ exists for each $j=1, \dots, k$. The construction \eqref{eqn:family-F} yields a locally free sheaf $\cF_\lambda^\WW(\cE_1, \dots, \cE_k) \in \Coh(S^{[n]} \times \cM)$. And by \eqref{eqn:fiber-of-F} we have
	$$ \cF_\lambda^\WW(\cE_1, \dots, \cE_k)_m \cong F_\lambda^\WW(E_1, \dots, E_k). $$
	By taking its dual we obtain
	$$ \left( \cF_\lambda^\WW(\cE_1, \dots, \cE_k)^\vee \right)_m \cong \left( \cF_\lambda^\WW(\cE_1, \dots, \cE_k)_m \right)^\vee \cong F_\lambda^\WW(E_1, \dots, E_k)^\vee. $$
	Therefore the collection
	$$ \left\{ F_\lambda^{\mathbb{W}}(E_1,\ldots,E_k)^\vee \mid ([E_1],\dots,[E_k])\in M_1\times\dots\times M_k \right\} $$ 
	is bounded, and the locally free sheaf $V$ satisfying \eqref{eqn:same-V} exists; see e.g.\ \cite[Lemma 1.7.6]{huybrechts_moduli}.
	
	In general, the moduli spaces $M_1, \dots, M_k$ are not necessarily fine. For $j=1, \dots, k$, we replace the universal family $\cE_j$ by a quasi-universal family $\tilde{\cE_j}$, which is still a locally free sheaf such that $$\left( \tilde{\cE_j} \right)_{[E_j]} = E_j^{\oplus n_j}$$ for some positive integer $n_j$; see \cite[Section 4.6]{huybrechts_moduli}. Note that we have
	$$ \left( \cF_\lambda^\WW(\tilde{\cE}_1, \dots, \tilde{\cE}_k)^\vee \right)_m \cong F_\lambda^\WW(E_1^{\oplus n_1}, \dots, E_k^{\oplus n_k})^\vee \cong \left( F_\lambda^\WW(E_1, \dots, E_k)^\vee \right)^{\oplus n_1 \cdots n_k}, $$
	and the above reasoning also implies the existence of a locally free sheaf $V \in \Coh(S^{[n]})$ which admits surjective maps to all members of the collection
	$$ \left\{ \left( \cF_\lambda^\WW(\tilde{\cE}_1, \dots, \tilde{\cE}_k)^\vee \right)_m \ \middle|\ ([E_1],\dots,[E_k])\in M_1\times\dots\times M_k \right\}. $$ 
	It follows that there are also surjective maps from $V$ to the collection of direct summands
	$$ \left\{ F_\lambda^{\mathbb{W}}(E_1,\ldots,E_k)^\vee \mid ([E_1],\dots,[E_k])\in M_1\times\dots\times M_k \right\}, $$ 
	which concludes the claim.
	
	By taking the dual of the surjective maps in the claim, we obtain injective maps
	$$F_\lambda^{\mathbb{W}}(E_1,\ldots,E_k) \hookrightarrow V^\vee$$ 
	for all $m=([E_1],\dots,[E_k])\in \cM$. Therefore every subsheaf of the locally free sheaf $F_\lambda^{\mathbb{W}}(E_1,\ldots,E_k)$ for some $m=([E_1],\dots,[E_k])\in \cM$ is a subsheaf of $V^\vee$. 
	
	Let $\beta$ be a big movable curve class; see \cite[Definition 2.3]{greb_movable_2016}. The set $$ \{ c_1(G) \mid G \subseteq V^\vee \text{ such that } \mu_{\beta}(G) \geqslant c \} $$ is finite by \cite[Theorem 2.29]{greb_movable_2016}, and so is the subset $$ \{ c_1(G) \mid G \subseteq F_\lambda^{\mathbb{W}}(E_1,\ldots,E_k) \text{ for some } (E_1,\dots,E_k) \text{ such that } \mu_\beta(G) \geqslant c \}. $$
	
	The rest of the proof of \cite[Theorem 2.8]{reede_zhang_22} works unaltered.
\end{proof}

\section{Moduli spaces of generalised tautological bundles}\label{sect:moduli}

In this section, we will construct moduli spaces of generalised tautological bundles from moduli spaces of stable sheaves on $S$ under favorable conditions, and show that they give smooth connected components of moduli spaces of sheaves on $S^{[n]}$. We start with some numerical preparations.

\subsection{Self-extensions of box products}

We compute the self-extensions of box products, which will be used to compare dimensions of moduli spaces on $S$ and $S^{[n]}$.
For a coherent sheaf $\cF$, we use the shorthand $\End^i(\cF):=\Ext^i(\cF,\cF)$. Similarly, for an $\sym_n$-equivariant coherent sheaf $\cE$ on $S^n$, we write
\[
\End^i_{\sym_n}(\cE):=\Ext^i_{\sym_n}(\cE,\cE)\cong \Ext^i(\cE,\cE)^{\sym_n}
\]
for the self-extension groups in the equivariant category.  
We start with the following special case with a single sheaf $E$ as the input.

\begin{lemma}\label{lem:purebox}
Let $E\in \Coh(S)$ be a simple sheaf on $S$, and $W$ an irreducible $\sym_n$-representation. Then
\begin{itemize}
	\item $\End^0_{\sym_n}(E^{\boxtimes n}\otimes W)\cong \CC$ holds for arbitrary $W$;
	\item $\End^1_{\sym_n}\bigl(E^{\boxtimes n}\otimes W\bigr)\cong \End^1(E)$ holds if and only if the Young diagram of $W$ is rectangular.
\end{itemize}
\end{lemma}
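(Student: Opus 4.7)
The plan is to reduce everything to representation theory of $\sym_n$ via the Künneth formula. The starting point is the isomorphism
\[
\Ext^i(E^{\boxtimes n}, E^{\boxtimes n}) \cong \bigoplus_{i_1+\dots+i_n=i} \End^{i_1}(E) \otimes \dots \otimes \End^{i_n}(E),
\]
equivariant for the $\sym_n$-action that permutes tensor factors. Simplicity of $E$ gives $\End^0(E) \cong \CC$, so the right-hand side collapses to the trivial representation $\CC$ when $i = 0$, and to $\End^1(E) \otimes \pi$ when $i = 1$, where $\pi \cong \CC^n$ denotes the natural permutation representation of $\sym_n$ and $\End^1(E)$ carries the trivial action. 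After tensoring both arguments of $\Ext$ with $W$, one picks up an extra factor $\End_\CC(W) \cong W^\vee\otimes W$, and taking $\sym_n$-invariants yields
\[
\End^0_{\sym_n}(E^{\boxtimes n}\otimes W) \cong \Hom_{\sym_n}(W,W), \qquad \End^1_{\sym_n}(E^{\boxtimes n}\otimes W) \cong \End^1(E) \otimes \Hom_{\sym_n}(W, \pi \otimes W).
\]

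The first bullet then follows immediately from Schur's lemma applied to the irreducible $W$. For the second bullet, the task reduces to showing that $\dim \Hom_{\sym_n}(W, \pi \otimes W) = 1$ if and only if the Young diagram of $W$ is rectangular. The plan is to use Frobenius reciprocity: writing $\pi = \Ind_{\sym_{n-1}}^{\sym_n}\mathbf 1$ and applying the projection formula gives
\[
\Hom_{\sym_n}(W, \pi\otimes W) \cong \Hom_{\sym_n}\bigl(W, \Ind_{\sym_{n-1}}^{\sym_n}\Res W\bigr) \cong \End_{\sym_{n-1}}\bigl(\Res_{\sym_{n-1}}^{\sym_n} W\bigr).
\]
The branching rule then gives $\Res_{\sym_{n-1}}^{\sym_n} W^\lambda \cong \bigoplus_{\lambda^-} W^{\lambda^-}$, where $\lambda^-$ ranges over the partitions obtained by removing a single corner box from $\lambda$. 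Since distinct $\lambda^-$ yield non-isomorphic irreducible $\sym_{n-1}$-representations, another application of Schur shows that this endomorphism space has dimension equal to the number of removable corners of $\lambda$. A Young diagram has exactly one removable corner if and only if it is rectangular, which yields the claim.

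The individual ingredients are standard. The only delicate point I anticipate is correctly identifying the $\sym_n$-equivariant structure in the Künneth decomposition of $\Ext^1$, in particular that $\End^1(E)$ enters with trivial action while all the combinatorial content is carried by $\pi$. This is because the group permutes the $n$ tensor slots, and in the $i = 1$ summand exactly one slot is occupied by $\End^1(E)$ with the remaining $n-1$ slots forced to be $\End^0(E) \cong \CC$; the $\sym_n$-data is therefore just the choice of which slot contains the $\End^1$ factor. Once this is set up, the remainder is a direct invocation of Frobenius reciprocity, the branching rule, and the combinatorial characterisation of rectangular Young diagrams.
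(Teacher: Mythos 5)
Your proof is correct, and its skeleton coincides with the paper's: both use the K\"unneth formula and simplicity of $E$ to identify $\End^1(E^{\boxtimes n})$ with $\End^1(E)\otimes \pi$, where $\pi$ is the permutation representation acting by marking which tensor slot carries the $\End^1$ factor, and both reduce the second bullet to computing the multiplicity space $\bigl(\pi\otimes\End(W)\bigr)^{\sym_n}$. The only divergence is in how that multiplicity is evaluated. The paper splits $\pi\cong \CC\oplus\rho$ with $\rho$ the standard representation, so that the question becomes whether $\Hom_{\sym_n}(W,\rho\otimes W)=0$, and it settles this by citing the classical fact that $W$ occurs in $\rho\otimes W$ exactly when the Young diagram of $W$ is not rectangular. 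You instead keep the full permutation representation, write $\pi=\Ind_{\sym_{n-1}}^{\sym_n}\mathbf 1$, and use the projection formula, Frobenius reciprocity, and the (multiplicity-free) branching rule to identify $\dim\Hom_{\sym_n}(W,\pi\otimes W)$ with the number of removable corners of the Young diagram, which equals $1$ precisely for rectangles. The two computations are equivalent --- your corner count is $1+\dim\Hom_{\sym_n}(W,\rho\otimes W)$ --- but your route has the advantage of being self-contained, replacing the external citation by a short standard derivation.
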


\begin{proof}
For the first statement, by Künneth formula we have
\[
\End^0(E^{\boxtimes n})\cong \End^0(E)^{\otimes n}\cong \CC^{\otimes n} \cong \CC,
\]
where the middle isomorphism is due to the simplicity of $E$. It follows that
\[
\End^0_{\sym_n}(E^{\boxtimes n}\otimes W)\cong \Bigl(\End^0(E^{\boxtimes n})\otimes \End(W)  \Bigr)^{\sym_n}\cong \End_{\sym_n}(W)\cong \CC,
\]
where the last isomorphism is due to the irreducibility of $W$. 

For the second statement, we again use the Künneth formula to get 
\begin{align*}
\End^1(E^{\boxtimes n}) &\cong
\bigl(\End^1(E)\otimes \End^0(E)\otimes\dots\otimes \End^0(E)\bigr) \\
&\oplus \bigl(\End^0(E)\otimes \End^1(E)\otimes \dots\otimes \End^0(E)\bigr)\\ &\oplus \ \dots \\
&\oplus \bigl(\End^0(E)\otimes \End^0(E)\otimes\dots\otimes \End^1(E)\bigr)\,.
\end{align*}
The $\sym_n$-action on $\End^1(E^{\boxtimes n})$ induced by the canonical $\sym_n$-action on $E^{\boxtimes n}$ is given by permuting the above direct summands. Since $E$ is simple, we get an isomorphism of $\sym_n$-representations 
\[
\End^1(E^{\boxtimes n})\cong \End^1(E)\otimes V
\]
with $\sym_n$ acting trivially on $\End^1(E)$, and $V$ being the permutation representation of $\sym_n$. The latter splits as $V\cong \CC\oplus \rho$ where $\rho$ is the standard representation. Hence,
\begin{align*}
\End^1_{\sym_n}(E^{\boxtimes n}\otimes W)&\cong \Bigl(\End^1(E)\otimes V\otimes \End(W) \Bigr)^{\sym_n}\\&\cong \End^1(E)\otimes \bigl(V\otimes \End(W) \bigr)^{\sym_n},
\end{align*}
in which
\begin{align*}
	\bigl( V\otimes \End(W) \bigr)^{\sym_n} &\cong \bigl( \End(W) \bigr)^{\sym_n} \oplus \bigl( \rho\otimes \End(W) \bigr)^{\sym_n} \\
	&\cong \End_{\sym_n}(W) \oplus \End_{\sym_n}(W, \rho \otimes W).
\end{align*}
We observe again that $\End_{\sym_n}(W) \cong \CC$ since $W$ is irreducible. Moreover, an irreducible $\sym_n$-representation $W$ is a direct summand of $\rho\otimes W$ if and only if the Young diagram of $W$ is not rectangular; see \cite[top of p.\ 258]{Hamermesh}.
In other words, $\End_{\sym_n}(W, \rho \otimes W)=0$ if and only if $W$ is rectangular.
\end{proof}

We want extend the above result to the more general case with mixed inputs. For a collection $E_1, \dots, E_k \in \Coh(S)$, we make the following assumptions

\begin{condition}\label{cond:Homvanish}
There exists a set partition $\{1,\dots, k\}=I_1\uplus \dots \uplus I_\ell$ such that 
\begin{itemize}
\item $\Hom(E_j,E_j)=\mathbb C$ for all $1\leqslant j \leqslant k$;
\item $\Hom(E_i,E_j)=0$ for all $1 \leqslant \alpha \leqslant \ell$ and $i,j\in I_\alpha$ with $i \neq j$;
\item $\Hom(E_j,E_i)=\Ext^1(E_j,E_i)=0$ for all $1\leqslant \alpha<\beta \leqslant \ell$, $i\in I_\alpha$ and $j\in I_\beta$.
\end{itemize}
\end{condition}

We point out that \autoref{cond:Homvanish} in particular implies $E_1, \dots, E_k$ are pairwise non-isomorphic. The following result indicates that \autoref{cond:Homvanish} is not difficult to be satisfied. 

\begin{lemma}\label{lem:can-satisfy}
Let $M_1,\dots, M_\ell$ be moduli spaces of stable sheaves on $S$ and let $\{1,\dots, k\}=I_1\uplus \dots \uplus I_\ell$. Then by tensoring with suitable line bundles on $S$ if necessary, we can always achieve that every collection of pairwise non-isomorphic sheaves $E_1,\dots,E_k$ with $[E_i] \in M_\alpha$ if $i \in I_\alpha$ satisfies \autoref{cond:Homvanish}.    
\end{lemma}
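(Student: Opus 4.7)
I would address the three bullet points of \autoref{cond:Homvanish} separately, needing line bundle twists only for the third of them.

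Conditions (i) and (ii) are automatic for sheaves coming from moduli spaces of stable sheaves, requiring no adjustment at all. Condition (i), namely $\Hom(E_j,E_j)=\CC$, holds because stable sheaves are simple. Condition (ii), namely $\Hom(E_i,E_j)=0$ for distinct $i,j\in I_\alpha$, follows from the classical fact that non-isomorphic stable sheaves of the same slope admit no nonzero homomorphism: the sheaves $E_i$ and $E_j$ share the same Hilbert polynomial since both lie in $M_\alpha$, and they are pairwise non-isomorphic by hypothesis.

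For condition (iii), I would fix an ample line bundle $H$ on $S$, choose a rapidly-growing sequence of integers $n_1<n_2<\dots<n_\ell$, and replace each $M_\alpha$ by the isomorphic moduli space parametrising the twisted sheaves $E\otimes H^{n_\alpha}$. Using the local freeness of sheaves in the moduli spaces (\autoref{cond:moduli_dist}), for $\alpha<\beta$, $i\in I_\alpha$, $j\in I_\beta$ the Ext groups unwind to
\[
\Ext^p(E_j\otimes H^{n_\beta},E_i\otimes H^{n_\alpha})\cong H^p\bigl(S,E_j^\vee\otimes E_i\otimes H^{n_\alpha-n_\beta}\bigr),
\]
which Serre duality identifies with the dual of $H^{2-p}\bigl(S,E_j\otimes E_i^\vee\otimes K_S\otimes H^{n_\beta-n_\alpha}\bigr)$. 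For $p\in\{0,1\}$ we have $2-p\geqslant 1$, so Serre vanishing kills this cohomology once the twist $n_\beta-n_\alpha$ is sufficiently large.

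The main step needing real care is \emph{uniformity} of Serre vanishing over the entire products $M_\alpha\times M_\beta$. I would handle this by the same device used in the proof of \autoref{same_div}: since the moduli spaces are of finite type and parametrise bounded families, one passes to an étale cover supporting a flat quasi-universal family and applies cohomology and base change to extract a single threshold $N$ that works for all $[E_i]\in M_\alpha$ and $[E_j]\in M_\beta$. Since there are only finitely many pairs $(\alpha,\beta)$ with $\alpha<\beta$, a common threshold exists, and setting, for example, $n_\alpha=\alpha N$ with $N$ sufficiently large then satisfies condition (iii) for all admissible choices of $E_1,\dots,E_k$ at once.
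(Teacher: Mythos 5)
Your proposal is correct and follows essentially the same route as the paper's proof: simplicity for the first item, the standard no-maps-between-non-isomorphic-stable-sheaves-of-equal-slope argument for the second, and for the third a twist by large powers of an ample line bundle combined with Serre duality and boundedness of the families to get a uniform vanishing threshold. Your write-up is somewhat more explicit about the choice of exponents $n_\alpha$ and the uniformity over $M_\alpha\times M_\beta$, but the underlying argument is the same.
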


\begin{proof}
The first item of \autoref{cond:Homvanish} holds because every stable sheaf is simple. 

For the second item, note that, for $i,j \in I_\alpha$ with $i \neq j$, the bundles $E_i$ and $E_j$ are non-isomorphic but of the same slope. Hence, there is no non-zero homomorphism between them.

For the third item, we can use boundedness of the classes of bundles parametrised by $M_\alpha$ and $M_\beta$. Indeed, twisting all sheaves parametrised by $M_\beta$ by the same sufficiently ample line bundle, we can assume that $\Ext^\ast(E_i,E_j\otimes \omega_S)$ has no higher cohomology for any $i\in I_\alpha$ and $j\in I_\beta$. By Serre duality, this gives the desired vanishing $\Hom(E_j,E_i)=\Ext^1(E_j,E_i)=0$.
\end{proof}

Under the above condition, we compute the self-extensions with mixed inputs.

\begin{proposition}\label{prop:end1}
Let $E_1,\dots, E_k$ be a collection of sheaves on $S$ satisfying \autoref{cond:Homvanish}. If the Young diagrams associated to the representations $W_1,\dots, W_k$ are all rectangular, then 
\[
\End^1_{\sym_n}\bigl(G^\WW_\lambda(E_1,\dots,E_k) \bigr)\cong \End^1(E_1)\oplus \dots \oplus \End^1(E_k).
\]
\end{proposition}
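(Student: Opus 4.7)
The plan is to compute $\End^1_{\sym_n}(G^\WW_\lambda)$ by writing it as a sum of $\Ext$-groups on $S^n$ indexed by double cosets in $\sym_\lambda\backslash\sym_n/\sym_\lambda$, identifying the identity contribution with $\bigoplus_j \End^1(E_j)$, and showing that all other contributions vanish. Concretely, combining Frobenius reciprocity for the induction–restriction adjunction on equivariant sheaves with the Mackey decomposition of $\Res^{\sym_n}_{\sym_\lambda}\Ind^{\sym_n}_{\sym_\lambda}$ yields
\[
\End^1_{\sym_n}\bigl(G^\WW_\lambda\bigr) \;\cong\; \bigoplus_{[g]\in \sym_\lambda\backslash\sym_n/\sym_\lambda} \Ext^1_{K_g}(\cF,\, g^\ast\cF),
\]
where $\cF \coloneqq \EE\otimes\WW$ and $K_g \coloneqq \sym_\lambda\cap g^{-1}\sym_\lambda g$.

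For the identity coset $[g]=[\id]$, one has $K_g = \sym_\lambda$ and the summand is $\End^1_{\sym_\lambda}(\cF)$. Since $\sym_\lambda = \sym_{\lambda_1}\times\cdots\times\sym_{\lambda_k}$ acts factor-wise on $S^n = S^{\lambda_1}\times\cdots\times S^{\lambda_k}$ with $\cF$ the external tensor product of the $\sym_{\lambda_j}$-equivariant sheaves $\cF_j \coloneqq E_j^{\boxtimes\lambda_j}\otimes W_j$, the Künneth formula produces an $\sym_\lambda$-equivariant splitting
\[
\End^1(\cF) \;\cong\; \bigoplus_{j=1}^k \End^1(\cF_j)\otimes \bigotimes_{i\neq j}\End^0(\cF_i).
\]
Taking $\sym_\lambda$-invariants and invoking \autoref{lem:purebox} — its first item for the $\End^0$-factors and its second item for the $\End^1$-factor, relying on the simplicity of each $E_i$ (first bullet of \autoref{cond:Homvanish}) and on the rectangularity of each $W_j$ — yields precisely $\bigoplus_j \End^1(E_j)$.

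For $[g]\neq [\id]$, i.e.\ $g\notin \sym_\lambda$, the task is to prove $\Ext^1_{K_g}(\cF, g^\ast\cF) = 0$. Since we work over $\CC$ with finite groups, this equivariant $\Ext$ is the $K_g$-invariant part of the underlying $\Ext^1$, so it suffices to show the non-equivariant vanishing $\Ext^1(\EE, g^\ast\EE)=0$. Writing $\EE = F_1\boxtimes\cdots\boxtimes F_n$ with $F_a \coloneqq E_{j(a)}$ for the type function $j\colon\{1,\dots,n\}\to\{1,\dots,k\}$ induced by $\lambda$, and letting $\alpha(a)$ denote the index of the bucket containing $j(a)$, Künneth gives
\[
\Ext^1(\EE, g^\ast\EE) \;\cong\; \bigoplus_{a=1}^n \Ext^1(F_a, F_{g(a)})\otimes \bigotimes_{b\neq a}\Hom(F_b, F_{g(b)}).
\]
\autoref{cond:Homvanish} forces every non-vanishing $\Hom$- or $\Ext^1$-factor to satisfy $\alpha(\cdot)\leq \alpha(g(\cdot))$, with equality only when $j(\cdot) = j(g(\cdot))$. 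As $g$ is a bijection, $\sum_a\alpha(g(a))=\sum_a\alpha(a)$; hence all inequalities must be equalities and $g$ has to preserve the type function, i.e.\ $g\in\sym_\lambda$, contradicting the assumption.

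The main obstacle is precisely this last combinatorial step: converting the one-sided $\Hom$ and $\Ext^1$ vanishing encoded in \autoref{cond:Homvanish} into the sharp conclusion that only $g\in\sym_\lambda$ contributes. The ordered-bucket structure of that condition is indispensable here — weakening it would permit surviving contributions from non-trivial double cosets and break the clean identification with $\bigoplus_j \End^1(E_j)$.
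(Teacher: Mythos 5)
Your proposal is correct and follows essentially the same route as the paper: reduce via adjunction to $\Ext$-groups on $S^n$ indexed by cosets of $\sym_\lambda$ in $\sym_n$, kill the non-identity cosets in degree $1$ using the K\"unneth formula together with \autoref{cond:Homvanish}, and compute the identity contribution with \autoref{lem:purebox}. The paper sums over single right cosets $\sym_\lambda\backslash\sym_n$ using only the adjunction $\Ind\dashv\Res$, whereas you invoke the full Mackey decomposition over double cosets; this is a cosmetic difference. Your combinatorial vanishing argument (summing the bucket indices $\alpha$ over the bijection $g$) is a uniform replacement for the paper's two-case analysis and does work, but one justification is overstated: the second bullet of \autoref{cond:Homvanish} controls only $\Hom$, not $\Ext^1$, within a bucket, so for the single $\Ext^1$-factor at position $a$ the implication ``equality $\alpha(a)=\alpha(g(a))$ forces $j(a)=j(g(a))$'' is not available, and hence ``all equalities'' does not by itself give that $g$ preserves the type function. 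The gap closes in one line: the $n-1$ surviving $\Hom$-factors force $j(g(b))=j(b)$ for all $b\neq a$, and since $g$ is a bijection the multisets $\{j(g(c))\}_{c}$ and $\{j(c)\}_{c}$ coincide, so $j(g(a))=j(a)$ as well, whence $g\in\sym_\lambda$, the desired contradiction. (The paper's Case 1 implicitly uses the same pigeonhole observation when asserting that there are at least \emph{two} indices $i$ with $j(i)\neq j(g(i))$.)
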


\begin{proof}
By the adjunction $\Ind \dashv \Res$, we have
\begin{equation}\label{eq:gradedEnd}
 \End^*_{\sym_n} \bigl( G^\WW_\lambda(E_1,\dots,E_k) \bigr) \cong \bigoplus_{[g]\in \sym_\lambda \backslash \sym_n}\Ext_{\sym_\lambda}^*\bigl(\EE\otimes \WW, g^*(\EE\otimes \WW)\bigr). 
\end{equation}
We show that $\Ext^1(\EE \otimes \WW, g^\ast (\EE \otimes \WW)) = 0$ for all $[g] \neq [\id]$. Forgetting the equivariant structure, $\EE\otimes \WW$ is just a direct sum of copies of $\EE$. Hence, it suffices to prove $\Ext^1(\EE, g^*\EE)=0$ for $g\notin \sym_\lambda$. We define $F_1,\dots, F_n$ as before by 
    \[F_i \coloneqq E_j\quad \text{for}\quad \lambda_1+\dots +\lambda_{j-1}< i \leqslant \lambda_1+\dots +\lambda_{j-1}+\lambda_j,\]
    then $\EE= F_1\boxtimes \dots \boxtimes F_n$. By Künneth formula we have
\begin{equation}\label{eq:KuennethF}\Ext^*\bigl(\EE, g^*\EE\bigr)\cong \bigotimes_{i=1}^n\Ext^*(F_i,F_{g(i)}).\end{equation}
The condition $g\notin \sym_\lambda$ means that there is some $i\in \{1,\dots,n\}$ with $F_i$ and $F_{g(i)}$ standing for different $E_j$'s. We distinguish two cases.

\textsc{Case 1.} Assume that all $(F_i,F_{g(i)})$ appearing on the right side of \eqref{eq:KuennethF} are of the form $(E_j,E_{j'})$ with $j,j'\in I_\alpha$ for some $1\leqslant \alpha \leqslant \ell$. Since $g \notin \sym_\lambda$, there must be at least two different values of $i$ with the corresponding $j \neq j'$, for which $\Ext^0(F_i, F_{g(i)}) = 0$ by the second item of \autoref{cond:Homvanish}. This means that the potentially non-zero part of the tensor product in \eqref{eq:KuennethF} starts in degree 2. In particular, $\Ext^1(\EE, g^*\EE)=0$.

\textsc{Case 2.} Otherwise, there exists some $i$ such that $(F_i, F_{g(i)})$ are of the form $(E_j, E_{j'})$, with $j \in I_\alpha$ and $j' \in I_{\alpha'}$ for $\alpha \neq \alpha'$. Among such $i$'s there must be one with the corresponding $\alpha > \alpha'$, which implies that $\Ext^\ast(F_i, F_{g(i)})$ vanishes in degree $0$ and $1$ by the third item of \autoref{cond:Homvanish}. Hence, \eqref{eq:KuennethF} again gives $\Ext^1(\EE, g^\ast\EE)=0$.

In summary, the only summand on the right side of \eqref{eq:gradedEnd} contributing a non-vanishing component of degree $1$ is given by $[g]=[\id]$. Hence, we have
\begin{align*}
	&\End^1_{\sym_n}(G^\WW_\lambda(E_1,\dots,E_k)) \\
	\cong\ &\End^1_{\sym_\lambda}\bigl((E_1^{\boxtimes \lambda_1}\otimes W_1) \boxtimes \dots \boxtimes (E_k^{\boxtimes \lambda_k}\otimes W_k)\bigr) \\
	\cong\ &\bigl(\End^1_{\sym_{\lambda_1}}(E_1^{\boxtimes \lambda_1}\otimes W_1)\otimes \End^0_{\sym_{\lambda_2}}(E_2^{\boxtimes \lambda_2}\otimes W_2)\otimes\dots\otimes \End^0_{\sym_{\lambda_k}}(E_k^{\boxtimes \lambda_k}\otimes W_k)\bigr)\\
	\oplus\ &\bigl(\End^0_{\sym_{\lambda_1}}(E_1^{\boxtimes \lambda_1}\otimes W_1)\otimes \End^1_{\sym_{\lambda_2}}(E_2^{\boxtimes \lambda_2}\otimes W_2)\otimes\dots\otimes \End^0_{\sym_{\lambda_k}}(E_k^{\boxtimes \lambda_k}\otimes W_k)\bigr)\\
	\oplus\ &\dots \\
	\oplus\ &\bigl(\End^0_{\sym_{\lambda_1}}(E_1^{\boxtimes \lambda_1}\otimes W_1)\otimes \End^0_{\sym_{\lambda_2}}(E_2^{\boxtimes \lambda_2}\otimes W_2)\otimes\dots\otimes \End^1_{\sym_{\lambda_k}}(E_k^{\boxtimes \lambda_k}\otimes W_k)\bigr),
\end{align*}
where the second isomorphism above is due to the Künneth formula. By the first assumption of \autoref{cond:Homvanish}, we can apply \autoref{lem:purebox} to further simplify the above and obtain
\begin{align*}
&\phantom{\cong} \ \End^1_{\sym_n}(G^\WW_\lambda(E_1,\dots,E_k)) \\
&\cong \End^1_{\sym_{\lambda_1}}(E_1^{\boxtimes \lambda_1}\otimes W_1)\oplus \End^1_{\sym_{\lambda_2}}(E_2^{\boxtimes \lambda_2}\otimes W_2)\oplus\dots\oplus \End^1_{\sym_{\lambda_k}}(E_k^{\boxtimes \lambda_k}\otimes W_k)\\
&\cong \End^1(E_1)\oplus \End^1(E_2)\oplus\dots\oplus \End^1(E_k). \qedhere
\end{align*}
\end{proof}

\subsection{An isomorphism between moduli spaces}

We resume to study the relation between moduli spaces of stable sheaves on $S$ and $S^{[n]}$. We first establish a morphism between them.

\begin{proposition}\label{prop:morphism-exists}
	Assume that $M_1, \dots, M_k$ are moduli spaces of stable sheaves on $S$ satisfying \autoref{cond:moduli_dist}. Then for some ample class $\tilde{H}$ on $S^{[n]}$, there exists a morphism
	\begin{equation}\label{eqn:mor_between_moduli}
		\varphi_\lambda^\WW \colon M_1\times\dots\times M_k \longrightarrow M_{S^{[n]}}
	\end{equation}
	where $M_{S^{[n]}}$ is the moduli space of $\tilde{H}$-stable sheaves on $S^{[n]}$, such that $$\varphi_\lambda^\WW ([E_1], \dots, [E_k]) = F_\lambda^\WW(E_1, \dots, E_k)$$ for each point $([E_1], \dots, [E_k]) \in M_1 \times \dots \times M_k$.
\end{proposition}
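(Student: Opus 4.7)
My strategy is to first use \autoref{same_div} to obtain an ample class $\tilde H \in \NS(S^{[n]})$ with respect to which $F_\lambda^\WW(E_1,\dots,E_k)$ is slope stable for every point $([E_1],\dots,[E_k]) \in \cM := M_1 \times \dots \times M_k$. This fixes the target moduli space $M_{S^{[n]}}$ and guarantees that the proposed assignment is set-theoretically well-defined. The remaining task is to promote this to a morphism of schemes, which I accomplish by constructing a suitable flat family of $\tilde H$-stable sheaves on $S^{[n]}$ parametrised by $\cM$ and invoking the universal property of the coarse moduli space.

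In the ideal case that each $M_j$ is fine, let $\cE_j$ denote the universal family on $S \times M_j$. Pulling back along the natural projections $\cM \to M_j$ and feeding the resulting families into the relative construction \eqref{eqn:family-G}--\eqref{eqn:family-F} produces a locally free sheaf $\cF_\lambda^\WW(\cE_1,\dots,\cE_k)$ on $S^{[n]} \times \cM$. \autoref{BKR-fam} together with the fiber identification \eqref{eqn:fiber-of-F} show that this is flat over $\cM$ with fiber $F_\lambda^\WW(E_1,\dots,E_k)$ at each point $m = ([E_1],\dots,[E_k])$. Since all fibers are $\tilde H$-stable, the universal property of $M_{S^{[n]}}$ immediately yields the desired morphism $\varphi_\lambda^\WW$ with the prescribed pointwise behaviour.

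In the general coarse case, I replace the universal families with quasi-universal families $\tilde\cE_j$ satisfying $(\tilde\cE_j)_{[E_j]} \cong E_j^{\oplus n_j}$, as was already done in the proof of \autoref{same_div}. The same construction then yields a locally free sheaf on $S^{[n]} \times \cM$ whose fiber at $([E_1],\dots,[E_k])$ is $F_\lambda^\WW(E_1^{\oplus n_1},\dots,E_k^{\oplus n_k})$, which decomposes into a direct sum of a constant number of copies of $F_\lambda^\WW(E_1,\dots,E_k)$ via the additivity of $\boxtimes$, $\Ind$ and $\Psi$. This is precisely a quasi-universal family on $\cM$ for the target, and by the standard étale-local descent procedure for quasi-universal families it induces a morphism to the coarse moduli space $M_{S^{[n]}}$ with the required pointwise description.

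The main obstacle is the bookkeeping in the quasi-universal case, specifically verifying that the multiplicity in the fibrewise decomposition $F_\lambda^\WW(E_1^{\oplus n_1},\dots,E_k^{\oplus n_k}) \cong F_\lambda^\WW(E_1,\dots,E_k)^{\oplus N}$ is a constant $N$ depending only on $\lambda$ and the $n_j$'s, independent of the chosen point $m \in \cM$. Once this constancy is confirmed by unwinding the definitions, the rest of the argument is a direct application of the general coarse moduli machinery.
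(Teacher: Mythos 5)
Your treatment of the fine case is correct and coincides with the paper's: pull back the universal families, apply the relative construction \eqref{eqn:family-G}--\eqref{eqn:family-F}, use \autoref{BKR-fam} and \autoref{same_div}, and invoke the classifying property of the coarse moduli space. The gap is in the non-fine case. The claimed fibrewise decomposition $F_\lambda^\WW(E_1^{\oplus n_1},\dots,E_k^{\oplus n_k})\cong F_\lambda^\WW(E_1,\dots,E_k)^{\oplus N}$ is false: the "additivity of $\boxtimes$" only holds for the underlying non-equivariant sheaves, whereas $\Ind_{\sym_\lambda}^{\sym_n}$ and $\Psi$ are applied in the \emph{equivariant} category. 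Concretely, $(E_j^{\oplus n_j})^{\boxtimes\lambda_j}\cong E_j^{\boxtimes\lambda_j}\otimes(\CC^{n_j})^{\otimes\lambda_j}$ as $\sym_{\lambda_j}$-equivariant sheaves, where $(\CC^{n_j})^{\otimes\lambda_j}$ carries the permutation representation; this representation decomposes into several non-isomorphic irreducibles, so the fibre of your family is a direct sum $\bigoplus_U F_\lambda^{U}(E_1,\dots,E_k)^{\oplus m_U}$ over various $U\in\irr(\sym_\lambda)$, not an isotypic multiple of $F_\lambda^{\WW}$. For example, with $\lambda=(2)$, $W=\mathbf 1$, $E=\cO_S$ and $n_1=2$ one gets
\begin{equation*}
F_{(2)}^{\mathbf 1}(\cO_S^{\oplus 2})\;\cong\;\Psi\bigl(\cO_{S^2}\otimes(\mathbf 1^{\oplus 3}\oplus\fa)\bigr)\;\cong\;\cO_{S^{[2]}}^{\oplus 3}\oplus\cO_{S^{[2]}}(-\delta)\,,
\end{equation*}
whose summands have different slopes; in particular the fibre is not even semistable, so your family is not a quasi-family in the sense of Huybrechts--Lehn and does not induce a morphism to $M_{S^{[n]}}$. (The same inaccuracy appears harmlessly in the proof of \autoref{same_div}, where only the fact that $F_\lambda^\WW(E_1,\dots,E_k)^\vee$ is a \emph{direct summand} of the fibre is needed; here you need the full isotypic statement, which fails.)

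There are two ways to repair this. The paper's route avoids quasi-universal families altogether: a universal family exists \'etale-locally on each $M_j$, so one constructs $\varphi_\lambda^\WW$ \'etale-locally via the fine-case argument and glues by \'etale descent (the local morphisms agree on overlaps because they have the same pointwise description). Alternatively, your quasi-universal approach can be salvaged by projecting onto the $\WW$-isotypic component of the $\sym_\lambda$-equivariant sheaf $\bigotimes_{j}\bigotimes_i\pi_{i,j}^\ast\tilde\cE_j\otimes\WW$ \emph{before} applying $\Ind_{\sym_\lambda}^{\sym_n}$; the resulting family does have fibres $F_\lambda^\WW(E_1,\dots,E_k)^{\oplus N}$ with $N=\dim\bigl(\bigotimes_j(\CC^{n_j})^{\otimes\lambda_j}\otimes\WW\otimes\WW^{\ast}\bigr)^{\sym_\lambda}$ constant, and is then a genuine quasi-family to which the standard descent machinery applies. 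Either fix is routine, but as written the step is incorrect.
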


\begin{proof}
	As in the proof of \autoref{same_div}, we still denote $\cM \coloneqq M_1 \times \dots \times M_k$. If $M_j$ is a fine moduli space for each $j=1, \dots, k$, we assume $\cE_j \in \Coh(S \times M_j)$ is a corresponding universal family. Under the assumptions in \autoref{cond:moduli_dist}, the construction \eqref{eqn:family-F} gives a locally free sheaf $\cF_\lambda^\WW(\cE_1, \dots, \cE_k)$, which can be viewed as a family of slope stable locally free sheaves on $S^{[n]}$ with respect to the same ample class $\tilde{H}$ by \autoref{same_div}. Therefore, we obtain a classifying morphism
\begin{align*}
	\varphi_\lambda^\WW \colon \ \mathcal{M} = M_1\times\dots\times M_k\ &\longrightarrow\ M_{S^{[n]}}, \\
	m=([E_1], \dots, [E_k])\ &\longmapsto\ F_\lambda^{\mathbb{W}}(E_1,\ldots,E_k).
\end{align*}

More generally, if the moduli spaces $M_1, \dots, M_k$ are not fine, we can construct the morphism $\varphi_\lambda^\WW$ \'{e}tale locally, where universal families exist. Then we can glue the local morphisms to get a global morphism by using \'{e}tale descent; see also \cite[Proposition 2.1]{MR4581141} for some more details in a similar construction.
\end{proof}

Our next goal is to show that $\varphi_\lambda^\WW$ is an isomorphism to a connected component of $M_{S^{[n]}}$ under mild assumptions. The following criterion will be helpful in showing that $\varphi_\lambda^\WW$ is injective on closed points.

\begin{proposition}\label{prop:inj_points}
	Let $M_1, \dots, M_k$ be moduli spaces satisfying \autoref{cond:moduli_dist}. Then for a pair of points $([E_1], \dots, [E_k]), ([E'_1], \dots, [E'_k]) \in M_1 \times \dots \times M_k$,
	\begin{equation}\label{eqn:inj-isom}
		G_\lambda^\WW(E_1, \dots, E_k) \cong G_\lambda^\WW(E'_1, \dots, E'_k)
	\end{equation}
	as $\sym_n$-equivariant sheaves if and only if $([E_1], \dots, [E_k])=([E'_1], \dots, [E'_k])$.
\end{proposition}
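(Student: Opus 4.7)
The plan is to prove the ``only if'' direction (the other being immediate) by pulling back both $\sym_n$-equivariant sheaves along a sufficiently generic coordinate slice $\iota \colon S \hookrightarrow S^n$, where the equivariant structure becomes irrelevant and each sheaf splits as a direct sum of the constituent $E_j$'s (resp.\ $E'_j$'s). The Krull--Schmidt theorem, combined with the second item of \autoref{cond:moduli_dist}, will then match each $E_j$ with $E'_j$ individually.

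Forgetting the equivariant structure, as already used in the proofs of \autoref{ind_stable} and \autoref{prop:end1}, one has
\[
G_\lambda^\WW(E_1, \dots, E_k) \cong \bigoplus_{[g] \in \sym_\lambda \backslash \sym_n} g^*(\EE \otimes \WW).
\]
The next step is to fix sufficiently general points $P_2, \dots, P_n \in S$ and to introduce the embedding $\iota \colon S \hookrightarrow S^n$, $x \mapsto (x, P_2, \dots, P_n)$. For each coset representative $g$, the composition $g \circ \iota$ places $x$ in the $g(1)$-th factor of $S^n$ and leaves the remaining factors as permuted copies of the $P_i$'s. Since $\sym_\lambda$ permutes positions within each block of $\lambda$, the block index $j([g]) \in \{1, \dots, k\}$ containing $g(1)$ depends only on the coset $[g]$; consequently
\[
(g \circ \iota)^* \EE \cong E_{j([g])}^{\oplus d_{[g]}}
\]
for some $d_{[g]} > 0$ given by a product of ranks of fibers at the $P_i$'s. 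Tensoring with $\WW$ and summing over cosets then gives
\[
\iota^* G_\lambda^\WW(E_1, \dots, E_k) \cong \bigoplus_{j=1}^k E_j^{\oplus m_j}, \qquad m_j > 0,
\]
where the multiplicities $m_j$ depend only on $\lambda$, $\WW$, and the ranks $\rk E_i$. Since ranks are constant on the connected components $M_j$, the same formula with identical $m_j$'s holds for the primed bundles.

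Given an isomorphism $G_\lambda^\WW(E_1, \dots, E_k) \cong G_\lambda^\WW(E'_1, \dots, E'_k)$ (only the underlying non-equivariant isomorphism is in fact needed), pulling back along $\iota$ yields $\bigoplus_j E_j^{\oplus m_j} \cong \bigoplus_j (E'_j)^{\oplus m_j}$ on $S$. Each $E_j$ and $E'_j$ is stable, hence indecomposable, so Krull--Schmidt (\cite[Thm.\ 2]{Atiyah--KS}) produces a permutation $\sigma \in \sym_k$ with $E_j \cong E'_{\sigma(j)}$. The second item of \autoref{cond:moduli_dist} precludes $E_j \cong E'_{j'}$ whenever $j \neq j'$, which forces $\sigma = \id$ and hence $[E_j] = [E'_j] \in M_j$ for every $j$.

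The only real care needed is the combinatorial bookkeeping in the splitting of $\iota^* G_\lambda^\WW$: verifying that $j([g])$ is well-defined on cosets and that each $m_j$ is strictly positive (both following from $\lambda_j \geqslant 1$ and the fact that $\sym_\lambda$ is the stabilizer of the associated partition of $\{1,\dots,n\}$). Once that is in place, the pairwise non-isomorphism built into \autoref{cond:moduli_dist} reduces the statement to a single application of Krull--Schmidt.
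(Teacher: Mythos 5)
Your proof is correct, but it takes a genuinely different route from the paper's. The paper works entirely on the equivariant side: by the adjunction $\Ind\dashv\Res$, $\Hom_{\sym_n}\bigl(G_\lambda^\WW(E_1,\dots,E_k),G_\lambda^\WW(E'_1,\dots,E'_k)\bigr)$ decomposes over the cosets $[g]\in\sym_\lambda\backslash\sym_n$; the K\"unneth formula and stability kill every off-diagonal summand $\Hom(\EE,g^*\EE')$ with $[g]\neq[\id]$, so the surviving term $\bigotimes_j\Hom(E_j,E'_j)^{\otimes\lambda_j}$ must be non-zero, forcing $E_j\cong E'_j$. You instead restrict the underlying non-equivariant bundle to a coordinate slice $\iota\colon S\hookrightarrow S^n$ and apply Krull--Schmidt; your bookkeeping is sound (the well-definedness of $j([g])$ on right cosets and the positivity of the $m_j$ match the count $p_{\lambda(\bar\jmath)}$ used later in \autoref{lem:c1G}, and $E_j$, $E'_j$ have equal rank since they lie in the same moduli space, though Krull--Schmidt would not even need the multiplicities to agree). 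Each approach buys something: the paper's computation is uniform with the machinery of \autoref{prop:end1} and yields the full equivariant Hom space, while yours proves a slightly stronger statement --- an isomorphism of the underlying non-equivariant sheaves already forces $([E_1],\dots,[E_k])=([E'_1],\dots,[E'_k])$ --- and sidesteps the Hom-vanishing step entirely. That step is in fact the one delicate point of the paper's argument: for non-isomorphic stable sheaves $A$ and $B$ one only gets $\Hom(A,B)=0$ when $\mu_H(A)\geqslant\mu_H(B)$, so the citation of the second item of \autoref{cond:moduli_dist} there implicitly relies on the slope-ordering argument spelled out in \autoref{prop:box-stability}; your route needs only indecomposability (from simplicity) and pairwise non-isomorphy, with no slope comparison at all.
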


\begin{proof}
	We only need to prove the only if part, for which we do computation similar to those in the proof of \autoref{prop:end1}. By the adjunction $\Ind\dashv \Res$, we have
	\[
        \Hom_{\sym_n}(G_\lambda^\WW(E_1, \dots, E_k), G_\lambda^\WW(E'_1, \dots, E'_k)) 
        = \Hom_{\sym_\lambda}(\EE \otimes \WW, \bigoplus_{[g] \in \sym_\lambda\backslash\sym_n} g^\ast(\EE' \otimes \WW)).
    \]
    For each $[g] \in \sym_\lambda\backslash\sym_n$, the K\"unneth formula implies that 
    $$\Hom(\EE, g^\ast\EE') \cong \bigotimes_{i=1}^n \Hom(F_i, F'_{g(i)}).$$
    If $[g] \neq [\id]$, then there exists some $i$, such that $F_i = E_j$ and $F'_{g(i)} = E'_{j'}$ with $j \neq j'$. By the second item of \autoref{cond:moduli_dist} we get $\Hom(F_i, F'_{g(i)})=0$. It follows that $\Hom(\EE, g^\ast\EE')=0$ and moreover $\Hom(\EE \otimes \WW, g^\ast(\EE' \otimes \WW))=0$. Therefore
    \[
        \Hom_{\sym_n}(G_\lambda^\WW(E_1, \dots, E_k), G_\lambda^\WW(E'_1, \dots, E'_k)) 
        = \Hom_{\sym_\lambda}(\EE \otimes \WW, \EE' \otimes \WW),
    \]
    which has to be non-zero if \eqref{eqn:inj-isom} holds.
    By Künneth formula again we have
    $$ \Hom(\EE, \EE') = \bigotimes_{i=1}^n \Hom(F_i, F'_i) = \bigotimes_{j=1}^k \Hom(E_j, E'_j)^{\otimes \lambda_j}. $$
    Therefore \eqref{eqn:inj-isom} implies that $E_j \cong E'_j$ for all $j = 1, \dots, k$.
\end{proof}

Finally, we are ready to state the main result of this section.

\begin{theorem}\label{thm:moduli-main-thm}
	Let $M_1, \dots, M_k$ be connected components of moduli spaces of stable sheaves on $S$. Assume they are smooth and projective, and satisfy \autoref{cond:moduli_dist}, such that \autoref{cond:Homvanish} holds with $\ell=k$ for every choice of $[E_1] \in M_1, \dots, [E_k] \in M_k$. Also assume that the Young diagrams associated to the representations $W_1, \dots, W_k$ are all rectangular. Then $\varphi_\lambda^\WW$ is an isomorphism from $\cM = M_1 \times \dots \times M_k$ to a connected component of $M_{S^{[n]}}$.
\end{theorem}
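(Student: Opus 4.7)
The plan is to show that $\varphi_\lambda^\WW$ is an isomorphism onto a subset of $M_{S^{[n]}}$ that is simultaneously open and closed, hence a connected component. The proof rests on three ingredients: injectivity on closed points, bijectivity of the differential at every closed point, and a topological argument identifying the image. For injectivity on closed points, an isomorphism $F_\lambda^\WW(E_1,\dots,E_k) \cong F_\lambda^\WW(E_1',\dots,E_k')$ translates, via $\Psi^{-1}$, into an $\sym_n$-equivariant isomorphism between the corresponding $G_\lambda^\WW$'s, so \autoref{prop:inj_points} (applicable by \autoref{cond:moduli_dist}) forces $([E_j]) = ([E_j'])$.

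For the differential at a point $m = ([E_1], \dots, [E_k])$, smoothness of each $M_j$ identifies $T_m \cM \cong \bigoplus_j \Ext^1(E_j, E_j)$, while $T_{[F]} M_{S^{[n]}} \cong \Ext^1(F, F) \cong \End^1_{\sym_n}\bigl(G_\lambda^\WW(E_1,\dots,E_k)\bigr)$ by the equivalence $\Psi$. Under the rectangularity assumption on the $W_j$'s together with \autoref{cond:Homvanish} with $\ell = k$, \autoref{prop:end1} further identifies this with $\bigoplus_j \Ext^1(E_j, E_j)$. The differential $d\varphi_\lambda^\WW$ is the Kodaira--Spencer map of the family $\cF_\lambda^\WW(\tilde\cE_1, \dots, \tilde\cE_k)$ on $S^{[n]} \times \cM$ built from quasi-universal families as in the proof of \autoref{prop:morphism-exists}. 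Since this family is produced by applying the exact functors $\Ind$ and $\Psi'$ to external tensor products of the individual quasi-universal families, its Kodaira--Spencer class at $m$ decomposes along the $k$ factors of $\cM$ and corresponds precisely to the $[g] = [\id]$ summand in \eqref{eq:gradedEnd}, matching the isomorphism of \autoref{prop:end1}.

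Granting that $d\varphi_\lambda^\WW$ is an isomorphism, the equality $\dim T_{[F]} M_{S^{[n]}} = \dim \cM$ forces $M_{S^{[n]}}$ to be smooth at every image point and shows that $\varphi_\lambda^\WW$ is \'etale at every closed point. Since $\cM$ is projective and $M_{S^{[n]}}$ is separated, $\varphi_\lambda^\WW$ is proper, hence finite \'etale onto its image; the injectivity on closed points forces the degree to be $1$, so $\varphi_\lambda^\WW$ is an isomorphism onto its image. This image is open by \'etaleness and closed by properness, and it is connected since $\cM$ is a product of the connected components $M_j$; therefore it is a connected component of $M_{S^{[n]}}$.

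The main obstacle I expect is the identification of the geometric Kodaira--Spencer map $d\varphi_\lambda^\WW$ with the cohomological isomorphism of \autoref{prop:end1}. While naturality of all the constructions involved makes this plausible, the rigorous verification requires compatibility of $\Psi'$ and the induction functor with restriction to tangent directions, which in turn rests on \autoref{BKR-fam} and the functoriality of the Bridgeland--King--Reid--Haiman equivalence in families.
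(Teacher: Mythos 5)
Your first two ingredients coincide exactly with the paper's proof: injectivity on closed points via \autoref{prop:inj_points} together with the fact that $\Psi$ is an equivalence, and the tangent-space identification $\dim T_{\varphi_\lambda^\WW(m)}M_{S^{[n]}} = \dim\End^1_{\sym_n}\bigl(G_\lambda^\WW(E_1,\dots,E_k)\bigr) = \sum_j \dim\End^1(E_j) = \dim T_m\cM$ via \autoref{prop:end1}. The divergence is in how you close the argument. You route through \'etaleness, which requires knowing that $d\varphi_\lambda^\WW$ is an \emph{isomorphism}, and for that you must identify the Kodaira--Spencer map of the family $\cF_\lambda^\WW(\tilde{\cE}_1,\dots,\tilde{\cE}_k)$ with the $[g]=[\id]$ summand of \eqref{eq:gradedEnd}. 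You explicitly ``grant'' this step and flag it as the main obstacle; as written, this is the one genuine gap in your argument, and making it rigorous (compatibility of $\Psi'$ and $\Ind$ with first-order deformations) would take real work.

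The paper avoids this entirely by invoking \cite[Lemma 1.6]{reede_examples_2021}: a morphism from a smooth projective connected variety that is injective on closed points and satisfies the tangent-space dimension equality at every point is automatically an isomorphism onto a connected component. The mechanism is that injectivity already forces the image to be a closed subvariety of dimension $\dim\cM$, so the equality $\dim T_{\varphi_\lambda^\WW(m)}M_{S^{[n]}} = \dim\cM$ forces $M_{S^{[n]}}$ to be smooth of dimension $\dim\cM$ along the image; the image is therefore open as well as closed, and Zariski's main theorem upgrades the injective proper morphism onto this smooth component to an isomorphism. In particular, bijectivity of $d\varphi_\lambda^\WW$ falls out as a consequence rather than being needed as an input, and the Kodaira--Spencer identification you worry about is unnecessary. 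If you replace your \'etaleness step by this dimension-count lemma, your proof is complete and agrees with the paper's.
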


\begin{proof}
	Under \autoref{cond:moduli_dist}, the morphism $\varphi_\lambda^\WW \colon \cM \to M_{S^{[n]}}$ was constructed in \autoref{prop:morphism-exists}, and \autoref{prop:inj_points} applies. Together with the fact that the functor $\Psi$ in \eqref{eqn:Fdef} is an equivalence, we conclude that $F_\lambda^\WW(E_1, \dots, E_k) \cong F_\lambda^\WW(E'_1, \dots, E'_k)$ if and only if $E_j \cong E'_j$ for $j=1, \dots, k$. Therefore $\varphi_\lambda^\WW$ is injective on closed points.
	
	Moreover, \autoref{prop:end1} holds under the assumption of \autoref{cond:Homvanish}. Together with the fact that the functor $\Psi$ in \eqref{eqn:Fdef} is an equivalence, we get
	\begin{align*}
		&\dim \End^1(F_\lambda^\WW(E_1, \dots, E_k)) = \dim \End^1_{\sym_n} (G_\lambda^\WW(E_1, \dots, E_k)) \\
		=\ &\dim \End^1(E_1) + \dots + \dim \End^1(E_k),
	\end{align*}
	which means indeed that
	$$ \dim T_{\varphi_\lambda^\WW ([E_1], \dots, [E_k])} M_{S^{[n]}} = \dim T_{([E_1], \dots, [E_k])}\cM. $$

	We conclude by \cite[Lemma 1.6]{reede_examples_2021} that $\varphi_\lambda^\WW$ is an isomorphism from $\cM$ to a connected component of $M_{S^{[n]}}$, as desired.
\end{proof}

This theorem gives an explicit description of many smooth projective connected components of $M_{S^{[n]}}$.

\begin{remark}
	Let us quickly explain the need for the assumptions made in \autoref{thm:moduli-main-thm}.
    
	If the second item in \autoref{cond:moduli_dist} is not met, then stability of $F_\lambda^{\WW}(E_1,\dots,E_k)$ can fail which leads to $\varphi_\lambda^\WW$ not being well-defined everywhere; see \autoref{rem:counterexamplestab}. Even if $\varphi_\lambda^\WW$ exists as a rational map, it might not be generically injective. To see this phenomenon, assume that $M_1=M_2\eqqcolon M$ (not just as an isomorphism of varieties, but really as moduli spaces parametrizing the same class of stable bundles), $\lambda=(\lambda_1,\lambda_2)$ with $\lambda_1=\lambda_2$, and $W_1\cong W_2$. Then, for all $[E_1],[E_2]\in M$, we have $F_\lambda^{\WW}(E_1,E_2)\cong F_\lambda^{\WW}(E_2,E_1)$. This means that the morphism \[\varphi_{\lambda}^{\WW}\colon (M\times M)\setminus \Delta \to M_{S^{[n]}}\] factors through the $\sym_2$-quotient in this case.
    
	If the Young diagrams of $W_j$'s are not rectangular, the proofs of \autoref{lem:purebox} and \autoref{prop:end1} show that the image of $\varphi_\lambda^\WW$ is of much smaller dimension than the component of $M_{S^{[n]}}$ that contains it (or $M_{S^{[n]}}$ is non-reduced). In other words, generalised tautological bundles only occupy a proper subvariety of such a component.

We are not sure whether the first assumption of \autoref{cond:moduli_dist}, that the moduli spaces only parametrize locally free sheaves, is really neccessary. We use this assumption in some steps of our arguments. However, we do not rule out that with some more technical work, one can extend \autoref{thm:moduli-main-thm} from locally free to torsion free sheaves; see \cite[Section 2]{ogrady_many} for partial results in this direction in the case $\lambda=(1,\dots, 1)$.

\end{remark}

\section{First Chern classes}\label{sect:Chern}

In this section, we compute an explicit formula for the first Chern class of generalised tautological bundles $c_1\bigl(F_\lambda^\WW(E_1,\dots, E_k)\bigr)$. Since $F=F_\lambda^\WW(E_1,\dots, E_k)$ is locally free, to compute its first Chern class, it is enough to understand its restriction to an open subset whose complement is of codimension at least $2$. 

\subsection{Strategy}
We consider the open subscheme
\begin{equation*}
	S^{(n)}_\ast \coloneqq \left\lbrace \sum x_i\in S^{(n)} \ \middle|\ \lvert \{x_1,\dots,x_n\} \rvert \geqslant n-1 \right\rbrace \subset S^{(n)}
\end{equation*}
which is the locus of unordered tuples in which at most two points coincide. 
Taking the inverse image of $S^{(n)}_*\subset S^{(n)}$ under all morphisms in \eqref{eqn:maindiagram}, we get the following commutative diagram
\begin{equation}\label{eq:McKaydiagopen}
	\begin{tikzcd}
		I^nS_\ast \arrow[r,"p"]\arrow[d,swap,"q"] & S^n_\ast\arrow[d,"\pi"]\\
		S^{[n]}_\ast\arrow[r,"\mu"']& S^{(n)}_\ast.
	\end{tikzcd}
\end{equation}
We denote the restriction of the maps of \eqref{eqn:maindiagram} to the $(\_)_*$-decorated open subsets still by the same symbols as before and hope that this does not cause confusion (decorating also the morphisms by $(\_)_*$ would surely cause confusion with push-forwards). However, if we restrict a sheaf, say $G$, on any of the four schemes of \eqref{eqn:maindiagram} to the open subscheme, we make this explicit by writing the restriction as $G_*$. Note that the codimension of the complement of $S^{[n]}_*$ in $S^{[n]}$ is 2, while the complements of the other three open subschemes are of codimension 4. 

The morphism $p\colon I^nS\to S^n$ is the blow-up in the big diagonal; see \cite[Prop.\ 3.4.2]{Hai}. The advantage of the restriction $p\colon I^nS_*\to S^n_*$ is that the big diagonal in $S^n_*$ becomes a disjoint union of smooth subvarieties $\Delta=\coprod\limits_{1\leqslant i < j \leqslant n} \Delta_{ij*}$, where $\Delta_{ij*}$ denotes the restriction of the pairwise diagonal
\begin{equation*}
	\Delta_{ij}=\left\lbrace (x_1,\ldots,x_n)\in S^n\,\,|\,\, x_i=x_j\, \right\rbrace 
\end{equation*}
to $S^n_*$. We denote the exceptional divisor by $E_*\subset I^nS_{*}$ and note that
\begin{equation*}
	E_*=\coprod\limits_{1\leqslant i < j \leqslant n} E_{ij*}
\end{equation*}
where $E_{ij*}=p^{-1}(\Delta_{ij*})\rightarrow \Delta_{ij*}$ is a $\mathbb{P}^1$-bundle. 
Note that $E_{ij*}$ is the fixed point locus of the action of $\mathfrak{S}_{ij}=\left\langle (ij) \right\rangle \subset \mathfrak{S}_n$, the subgroup generated by the transposition $(ij)$, on $I^nS_*$.

Note that $S^{[n]}_{*}$ is still the $\mathfrak{S}_n$-quotient of $I^nS_*$. However, in contrast to its action on $I^nS$, the alternating normal subgroup $\mathfrak{A}_n \lhd \mathfrak{S}_n$ acts freely on $I^nS_*$ as the only elements with fixed points in $I^nS_*$ are the transpositions, which are not in $\mathfrak{A}_n$. Thus, the morphism $q: I^nS_{*}\rightarrow S^{[n]}_{*}$ factorises as 
\begin{equation*}
	\begin{tikzcd}
		I^nS_{*} \arrow[r,"q_1"] & T=I^nS_{*}/\mathfrak{A}_n\arrow[r,"q_2"]& S^{[n]}_{*}	
	\end{tikzcd}
\end{equation*}
where $q_1: I^nS_* \rightarrow T$ is an \'{e}tale morphism of degree $\frac{n!}{2}$, and $q_2 : T \rightarrow S^{[n]}_{*}$ is a double cover, with covering group $H=\mathfrak{S}_n/\mathfrak{A}_n$, ramified over the boundary divisor 
\begin{equation*}
	D_*=D\cap S^{[n]}_*\quad,\quad D=\left\lbrace [Z]\in S^{[n]}\,\,|\,\, |\mathrm{supp}(Z)|\leqslant n-1\, \right\rbrace. 
\end{equation*}
We set  $E'=q_2^{-1}(D_*)=q_1(E_*)$, and write $\fa_H$ for the alternating representation of the order 2 group $H$.

For the convenience of readers, we summarise all notations in the diagram
\begin{equation*}
	\begin{tikzcd}[row sep=large]
		& & & E_{ij\ast} \ar[rr,"\PP^1"]\ar[d,"j"']\ar[dll, "l"']\ar[ddlll, bend right=50, "r"'] & & \Delta_{ij\ast} \ar[d] \\
		& E_\ast \ar[rr,hook,"k"] \ar[dl,"\pi_1"'] & & I^nS_\ast \ar[rr,"p"] \ar[dl,"(\_)/\mathfrak{A}_n=q_1"'] \ar[dd,"q=(\_)/\sym_n"] & & S^n_\ast \ar[dd,"\pi"] \\
		E'\ar[dr,"\pi_2"'] \ar[rr,hook,"i"] & & T \ar[dr,"(\_)/H=q_2"'] & & & \\
		& D_\ast \ar[rr,hook] & & S^{[n]}_\ast \ar[rr,"\mu"] & & S^{(n)}_\ast
	\end{tikzcd}
\end{equation*}

By \cite[Theorem 4.1]{kuzper} (see also \cite[Theorem 5.1]{colpol}) there is a semi-orthogonal decomposition
\begin{equation}\label{sod}
	\mathrm{D}^b_{H}(T)=\left\langle i_{*}\mathrm{D}^b(E')\otimes\fa_H,\,q_2^{*}\mathrm{D}^b(S^{[n]}_{*}) \right\rangle 
\end{equation}
that we will use in the following.

\begin{lemma}\label{lem:KPses}
	Let $\cG$ be an $\sym_n$-equivariant locally free sheaf on $I^nS_*$. Then there is an exact sequence
	\begin{equation}\label{eqn:push-pull}
		\begin{tikzcd}
			0\arrow[r]& q^{*}q_{*}^{\mathfrak{S}_n}\cG \arrow[r] & \cG \arrow[r]& \mathcal{C} \arrow[r] & 0	\quad, \quad \cC= \bigoplus\limits_{1\leqslant i < j \leqslant n} \left( \cG_{|E_{ij*}}\otimes\fa_{ij} \right)^{\mathfrak{S}_{ij}}
		\end{tikzcd}
	\end{equation}
in $\Coh(I^nS_*)$ where $\fa_{ij}$ denotes the alternating representation of $\mathfrak{S}_{ij}$.
\end{lemma}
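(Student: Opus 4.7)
The morphism in question is the counit of the adjunction $q^* \dashv q_*^{\sym_n}$ between $\Coh(S^{[n]}_*)$ and $\Coh_{\sym_n}(I^nS_*)$, and my plan is to analyse it locally near the (disjoint) components of $E_*$ and then patch.

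Off $E_*$ the action of $\sym_n$ on $I^nS_*$ is free, so the counit is an isomorphism there; in particular, its kernel and cokernel are both supported on the divisor $E_*$. From the factorisation $q = q_2 \circ q_1$, in which $q_1$ is étale and $q_2$ is a degree-$2$ morphism between the smooth varieties $T$ and $S^{[n]}_*$ (and hence flat by miracle flatness), we see that $q$ is finite flat. Therefore $q_*\cG$ is locally free, and its $\sym_n$-invariant direct summand $q_*^{\sym_n}\cG$, together with its pullback $q^*q_*^{\sym_n}\cG$, is locally free as well. Since a subsheaf of a locally free sheaf on the smooth $I^nS_*$ supported on a proper closed subset must vanish, the kernel of the counit is zero.

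For the cokernel, since $E_* = \bigsqcup_{1 \leqslant i < j \leqslant n} E_{ij*}$, it suffices to argue in a $\sym_{ij}$-stable neighborhood $U$ of a point $p \in E_{ij*}$ chosen so that the translates $gU$, for $[g] \in \sym_{ij}\backslash\sym_n$, are pairwise disjoint; then the restriction of the counit to $U$ coincides with the counit for the local $\sym_{ij}\cong \ZZ/2$-quotient $U \to U/\sym_{ij}$ applied to $\cG|_U$. Choose local coordinates on $U$ with $E_{ij*}\cap U = \{x=0\}$ and $(i\,j)\colon x \mapsto -x$, and set $A = \cO(U)$, $B = A^{\sym_{ij}}$, so that $A = B \oplus xB$. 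Decomposing $\cG|_U = \cG^+ \oplus \cG^-$ into $(i\,j)$-eigensheaves, a direct computation identifies the local counit as the inclusion $(q|_U)^*(q|_U)_*^{\sym_{ij}}\cG|_U \cong \cG^+ \oplus x\cG^+ \hookrightarrow \cG^+ \oplus \cG^- = \cG|_U$ (noting $x\cG^+ \subseteq \cG^-$, and injective since $x$ is a non-zero-divisor on the locally free $\cG$), whose cokernel is $\cG^-/x\cG^+$.

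To match with the claimed expression, observe that $(i\,j)$ acts trivially on $\cO_{E_{ij*}}$, so $\cG|_{E_{ij*}\cap U}$ decomposes into $\pm$-eigensheaves, and tensoring with the sign representation $\fa_{ij}$ swaps them. Taking $\sym_{ij}$-invariants thus produces $(\cG|_{E_{ij*}\cap U}\otimes \fa_{ij})^{\sym_{ij}} \cong \cG^-|_{E_{ij*}\cap U} = \cG^-/x\cG^+$, which matches the local cokernel. Because the identification is canonical on each component $E_{ij*}$ (arising from the intrinsic $(i\,j)$-eigenspace decomposition of $\cG$ under its stabiliser), the local data patch to the claimed global short exact sequence. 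The main technical point is the local-to-global patching, which is essentially forced by the naturality of the eigenspace decomposition and the disjointness of the components $E_{ij*}$ in $I^nS_*$.
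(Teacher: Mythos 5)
Your proof is correct, but it takes a genuinely different route from the one in the paper. The paper derives the sequence from the semiorthogonal decomposition \eqref{sod} of Kuznetsov--Perry and Collins--Polishchuk for the ramified double cover $q_2\colon T\to S^{[n]}_*$: it first writes down the resulting exact sequence on $T$ for $M=(q_1)_*^{\mathfrak A_n}\cG$, notes that the exact triangle is already a short exact sequence of sheaves because $M$ is locally free, and then transports everything to $I^nS_*$ along the free \'etale quotient $q_1$ using flat base change. You instead identify the map as the adjunction counit and compute it by hand: injectivity via descent off the ramification locus together with torsion-freeness of $q^*q_*^{\sym_n}\cG$, and the cokernel via the eigensheaf computation for the local $\ZZ/2$-quotient at the fixed divisor ($A=B\oplus xB$, image $\cG^+\oplus x\cG^+$, cokernel $\cG^-/x\cG^+\cong\bigl(\cG_{|E_{ij*}}\otimes\fa_{ij}\bigr)^{\sym_{ij}}$), followed by patching over the disjoint components. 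The local algebra checks out (in particular $A^-=xB$ because anti-invariant functions vanish on the fixed divisor, and multiplication by $x$ is injective on the locally free $\cG$), and the patching you only gesture at is indeed automatic: the composite $\cG\to\cG_{|E_{ij*}}\to\bigl(\cG_{|E_{ij*}}\otimes\fa_{ij}\bigr)^{\sym_{ij}}$ is a globally defined surjection whose kernel your local computation identifies with the image of the counit. Your argument is more elementary and self-contained, avoiding the derived-category input at the cost of local bookkeeping; the paper's route gets naturality and the equivariant refinement discussed in the subsequent remark essentially for free from the cited theorem. Both rely on the same geometric inputs: smoothness of $I^nS_*$, $T$ and $S^{[n]}_*$, freeness of the $\sym_n$-action off $E_*$, and the identification of $E_{ij*}$ as the fixed divisor of $\sym_{ij}$.
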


\begin{proof}
Set $M:=(q_1)_*^{\mathfrak{A}_n}\cG$, then $M$ is locally free as $\cG$ is and $q_1$ is \'{e}tale. By \eqref{sod}, we get an exact sequence
	\begin{equation}\label{eq:Tses}
	\begin{tikzcd}
		0\arrow[r]& q_2^{*}(q_2)_{*}^{H}M \arrow[r] & M \arrow[r]& \mathcal{C}' \arrow[r] & 0	
	\end{tikzcd}
\end{equation}
in $\Coh_H(T)$ where $\mathcal{C}'=i_{*}\left( \left(i^{*}M\otimes \fa_H \right)^H\otimes \fa_H\right)$. Note that \eqref{sod} a priori gives an exact triangle in $\mathrm{D}^b_H(T)$ and the pull-back $i^*$ in $\cC'$ needs to be derived. But, as $M$ is a locally free sheaf, the pull-back $i^*M$ does not need to be derived and the triangle in the derived category reduces to an exact sequence of sheaves. Further note that, 
since we are only interested in the underlying non-equivariant sheaves, we can write simply $\mathcal{C}'=i_{*}\bigl((i^{*}M\otimes \fa_H)^H\bigr)$, which means that we can ignore the tensor product by the non-trivial character after taking the invariants, but not before taking the invariants.

We show that the pull-back of \eqref{eq:Tses} along the flat morphism $q_1$ gives the asserted short exact sequence in $\Coh_{\sym_n}(I^nS_*)$. Since $q_1$ is the quotient by the free $\mathfrak{A}_n$-action, 
\begin{equation}\label{eq:qM}
	q_1^{*}M\cong q_1^{*}(q_1)_{*}^{\mathfrak{A}_n}\cG\cong \cG\,. 
\end{equation}
Furthermore, by functoriality of push-forwards and pull-backs,
\[
	q_1^{*}q_2^{*}\left(q_2 \right)_*^H M \cong (q_2\circ q_1)^{*}\left(q_2 \right)_*^H\left(q_1 \right)_*^{\mathfrak{A}_n} \cG \cong (q_2\circ q_1)^{*}\left(q_2 \circ q_1 \right)_*^{\mathfrak{S}_n} G \cong q^{*}q_{*}^{\mathfrak{S}_n} \cG 
\]
To compute $\cC:=q_1^{*}\mathcal{C}'$, we first note that by flat base change 
along the cartesian diagram 
	\begin{equation}\label{eq:Ediag}
	\begin{tikzcd}
			E_* \arrow[r,"k"]\arrow[d,swap,"\pi_1"] & I^nS_*\arrow[d,"q_1"]\\
		 E'\arrow[r,"i"]& T
	\end{tikzcd}
\end{equation}
we get $\cC\cong k_*\pi_1^* \bigl((i^{*}M\otimes \fa_H)^H\bigr)$.
In particular, $\cC$ is scheme-theoretically supported on $E_*=\coprod E_{ij*}$ which gives 
\[
\cC\cong \bigoplus \cC_{\mid E_{ij*}}\,.
\]
Now, for some fixed $1\leqslant i<j\leqslant n$, we enlarge \eqref{eq:Ediag} to the diagram
\begin{equation*}
	\begin{tikzcd}
		E_{ij*}\arrow[rr, bend left, "j"]\arrow[r, "l"]\arrow[dr,swap,"r"]&	E_* \arrow[r,"k"]\arrow[d,swap,"\pi_1"] & I^nS_*\arrow[d,"q_1"]\\
		& E'\arrow[r,"i"]& T
	\end{tikzcd}
\end{equation*}
All morphisms of the outer part of this diagram, i.e.\ in the commutative diagram
\begin{equation*}
	\begin{tikzcd}
		E_{ij*} \arrow[r,"j"]\arrow[d,swap,"r"] & I^nS_*\arrow[d,"q_1"]\\
		 E'\arrow[r,"i"]& T\,,
	\end{tikzcd}
\end{equation*}
are $\sym_{12}$-equivariant when we consider $\sym_{12}$ acting on the top row as a subgroup of $\sym_n$ and on the bottom row via the natural isomorphism $\sym_{12}\hookrightarrow \sym_n\to \sym_n/\mathfrak A_n=H$. Hence, 
\begin{align*}
\cC_{\mid E_{ij*}}=j^*q_1^*\cC'\cong j^*q_1^* i_{*}\bigl((i^{*}M\otimes \fa_H)^H\bigr)&\cong r^*i^* i_{*}\bigl((i^{*}M\otimes \fa_H)^H\bigr)\\
&\cong r^*\bigl((i^{*}M\otimes \fa_H)^H\bigr)\\
&\cong (j^*q_1^{*}M\otimes \fa_{12})^{\sym_{12}}\\
&\cong \bigl(\cG_{\mid E_{ij}}\otimes \fa_{12})^{\sym_{12}}
\end{align*}
where the last isomorphism uses \eqref{eq:qM}.
\end{proof}

\begin{remark}
	With just a little bit more care than in the above proof, one can turn \eqref{eqn:push-pull} into a short exact sequence in 
$\Coh_{\sym_n}(I^nS_*)$, i.e.\ a sequence of equivariant sheaves, by writing 
\[
\cC=\Ind_{\sym_2\times \sym_{n-2}}^{\sym_n} \bigl( ( \cG_{|E_{12}}\otimes\fa_{12} )^{\mathfrak{S}_{12}}\otimes \fa_{12}\bigr)\,.
\]
However, the linearisations of the sheaves do not play a role for their Chern classes, so we opted to only prove the slightly weaker non-equivariant statement. 
\end{remark}

\begin{proposition}\label{prop:c1gen}
Let $G$ be a $\sym_n$-equivariant locally free sheaf on $S^n$ such that $c_1(G)=B_{S^n}$ for some $B\in \Pic S$. Then, setting $\delta=[D]/2$, we have 
\[
c_1(\Psi(G))= B_{S^{[n]}}- \rk((G_{\mid \Delta_{12}}\otimes \fa_{12})^{\sym_{12}})\cdot \delta 
\]
\end{proposition}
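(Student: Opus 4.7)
The plan is to pass to the open subset $S^{[n]}_*$ (whose complement has codimension $2$, so first Chern classes are determined by restriction) and apply \autoref{lem:KPses} to the locally free $\sym_n$-equivariant sheaf $\cG = p^*G$ restricted to $I^nS_*$. Since $\Psi(G) = q_*^{\sym_n} p^* G$, the short exact sequence becomes
\[
0 \to q^*\bigl(\Psi(G)\bigr)_* \to (p^*G)_{|I^nS_*} \to \cC \to 0
\]
with $\cC = \bigoplus_{1\leqslant i < j \leqslant n}\bigl((p^*G)_{|E_{ij*}}\otimes \fa_{ij}\bigr)^{\sym_{ij}}$. Additivity of $c_1$ then yields
\[
q^* c_1\bigl((\Psi(G))_*\bigr) = p^* c_1(G)_{|I^nS_*} - c_1(\cC).
\]

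Next I would evaluate the two terms on the right. For the first term, since $B_{S^n} = \pi^* B_{S^{(n)}}$ and $\pi \circ p = \mu \circ q$, one gets $p^* B_{S^n} = q^* B_{S^{[n]}}$ on $I^nS_*$. For the second term, each $E_{ij*}$ is a smooth divisor and the $\sym_{ij}$-fixed subsheaf $\bigl((p^*G)_{|E_{ij*}}\otimes \fa_{ij}\bigr)^{\sym_{ij}}$ is locally free on $E_{ij*}$, so its pushforward to $I^nS_*$ contributes $r_{ij}\cdot [E_{ij*}]$ to $c_1(\cC)$, where $r_{ij}$ is its rank. Because $p$ is equivariant, $p^*G_{|E_{ij*}}$ is the pullback of $G_{|\Delta_{ij*}}$ (on which $\sym_{ij}$ acts fibrewise), so $r_{ij} = \rk\bigl((G_{|\Delta_{ij}}\otimes \fa_{ij})^{\sym_{ij}}\bigr)$. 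Using the $\sym_n$-equivariance of $G$ to conjugate $(1,2)$ to $(i,j)$, all $r_{ij}$ coincide with $r := \rk\bigl((G_{|\Delta_{12}}\otimes \fa_{12})^{\sym_{12}}\bigr)$, so $c_1(\cC) = r\cdot [E_*]$.

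The third ingredient is the identification $[E_*] = q^*\delta$ on $I^nS_*$. This follows from the factorisation $q = q_2 \circ q_1$: the morphism $q_1$ is étale with $q_1^*E' = E_*$, while $q_2$ is a double cover ramified over $D_*$, giving $q_2^* D_* = 2 E'$; combining these yields $q^* [D_*] = 2[E_*]$, i.e.\ $q^* \delta = [E_*]$. Substituting,
\[
q^* c_1\bigl((\Psi(G))_*\bigr) = q^* B_{S^{[n]}} - r \cdot q^*\delta = q^*\bigl(B_{S^{[n]}} - r\delta\bigr).
\]

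To conclude, I would use that $q$ is a finite surjective morphism, so $q^*\colon \Pic(S^{[n]}_*)\otimes \QQ \to \Pic(I^nS_*)\otimes \QQ$ is injective (alternatively, push forward via $q_*$ and divide by $\deg q = n!$), which allows cancellation of $q^*$ and gives the formula on $S^{[n]}_*$. Finally, since $\Psi(G)$ is a locally free sheaf on all of $S^{[n]}$ by \autoref{BKR-fam} and the complement $S^{[n]}\setminus S^{[n]}_*$ has codimension $2$, the identity extends uniquely to $S^{[n]}$. The main technical point is ensuring the equality $[E_*] = q^*\delta$ with the correct factor of two coming from ramification; once that is in place, everything else reduces to bookkeeping with short exact sequences and equivariant ranks.
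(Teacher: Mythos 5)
Your proposal is correct and follows essentially the same route as the paper's proof: restrict to $S^{[n]}_*$, apply \autoref{lem:KPses} to $p^*G$, identify $p^*c_1(G)$ with $q^*B_{S^{[n]}}$ via the commutative square, compute $c_1(\cC)$ as a common rank times $\sum_{i<j}[E_{ij*}]=q^*\delta$, and cancel $q^*$ by injectivity. The only difference is that you justify $q^*\delta=[E_*]$ explicitly through the factorisation $q=q_2\circ q_1$ and the ramification of the double cover $q_2$, a step the paper simply asserts.
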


\begin{proof}
As alluded to at the start of this section, we can compute $c_1(\Psi(G))$ on $S^{[n]}_*$ instead of $S^{[n]}$. The reason is that the complement of $S^{[n]}_*$ of $S^{[n]}$ is of codimension 2. Hence, restriction gives an isomorphism $\Pic(S^{[n]})\cong \Pic(S^{[n]}_*)$, and $c_1(\Psi(G))$ and $c_1(\Psi(G)_*)=c_1(\Psi(G_*))$ are identified by this isomorphism. We will not distinguish between $\Delta_{ij*}$, $E_{ij*}$, $G_*$ and $\Delta_{ij}$, $E_{ij}$, $G$ in this proof to lighten the notation.

Applying \autoref{lem:KPses} with $\cG=p^*(G)$, we get the formula
\begin{equation}\label{eq:qChern}
q^*c_1(\Psi(G))= p^* c_1(G) - c_1(\cC)\,.
\end{equation}
By the commutativity of \eqref{eq:McKaydiagopen}, we get 
\[
p^* c_1(G)=p^*(B_{S^n})=q^*(B_{S^{[n]}})\,.
\]
As $\cC$ is supported on the disjoint divisors $E_{ij}$, we have 
\[
c_1(\cC)=\sum_{1\leqslant i<j\leqslant n} \rk(\cC_{\mid E_{ij}}) E_{ij}\,.
\]
By $\sym_n$-equivariance, all the $\rk(\cC_{\mid E_{ij}})$ are the same. Furthermore, $\displaystyle q^*\delta=\sum_{1\leqslant i<j\leqslant n} E_{ij}$. Hence,
\[
c_1(\cC)=\rk(\cC_{\mid E_{12}}) q^*\delta\,.
\]
We have
\[
\rk(\cC_{\mid E_{12}})= \rk\bigl(((p^*G)_{\mid E_{12}}\otimes \fa_{12})^{\sym_{12}} \bigr)=\rk((G_{\mid \Delta_{12}}\otimes \fa_{12})^{\sym_{12}})\,. 
\]
Combining all of the above, we can rewrite \eqref{eq:qChern} as
\[
q^*c_1(\Psi(G))= q^*(B_{S^{[n]}})  - \rk((G_{\mid \Delta_{12}}\otimes \fa_{12})^{\sym_{12}}) \cdot q^*\delta\,.
\]
The assertion follows from the injectivity of $q^*$.
\end{proof}

\begin{remark}
	By the $\sym_n$-equivariance of $G$, we have \[\rk((G_{\mid \Delta_{12}}\otimes \fa_{12})^{\sym_{12}})=\rk((G_{\mid \Delta_{ij}}\otimes \fa_{ij})^{\sym_{ij}})\] for any $1\leqslant i<j\leqslant n$. Hence, we could replace $(\_)_{12}$ by any other double index $(\_)_{ij}$ in the formula of \autoref{prop:c1gen}.
\end{remark}

\subsection{Computation}

We now consider the case that $G=G_{\lambda}^\WW(E_1,\dots, E_k)$ for some partition $\lambda=(\lambda_1,\dots,\lambda_k)$ of $n$ and some irreducible presentation $\WW=W_1\otimes \dots\otimes W_k$ of $\sym_{\lambda}= \sym_{\lambda_1}\times \dots\times \sym_{\lambda_k}$ with $W_i$ an irreducible representation of $\sym_{\lambda_i}$. 

Recall the notation $\EE:=E_1^{\boxtimes \lambda_1}\boxtimes \dots \boxtimes E_k^{\boxtimes \lambda_k}$, which gives
\begin{equation}\label{eq:Indsummands}
G=G_{\lambda}^{\WW}(E_1,\dots, E_k)=\Ind_{\sym_\lambda}^{\sym_n}(\EE\otimes \WW)\cong \bigoplus_{[g]\in \sym_\lambda\backslash \sym_n} g^*(\EE\otimes \WW)
\end{equation}
where the direct sum decomposition only holds for the underlying non-equivariant sheaf.
Note that the coset $\sym_\lambda\backslash \sym_n$ is in bijection to the set of set-valued partitions of $\{1,\dots,n\}$ with underlying numerical partition $\lambda$ via 
$g\mapsto \bigl(g^{-1}(I_1), \dots, g^{-1}(I_k)\bigr)$ where
\[
I_\ell=\bigl[1+\sum_{i=1}^{\ell-1}\lambda_i, \sum_{i=1}^\ell \lambda_i\bigr]=\bigl\{j \mid \text{at the $j$-th position of the box product $\EE$ there is }E_\ell\bigr\}
\]
We introduce some notation which enables us to state the formulas in a more compact way. We write 
\[
p_{\lambda}\coloneqq [\sym_n:\sym_{\lambda}]=\frac{|\sym_n|}{|\sym_{\lambda}|}=\frac{n!}{\lambda_1!\cdots \lambda_k!}\,.
\]
Furthermore, for $1\leqslant 1\leqslant k$, we consider the partition $\lambda(\bar i)\coloneqq(\lambda_1,\dots, \lambda_i-1, \dots, \lambda_k)$ of $n-1$, where the sequence has to be reordered if necessary to be non-decreasing. Similarly, for $1\leqslant i\leqslant j\leqslant k$ we consider the partition of $n-2$ given by 
\[
\lambda(\bar i, \bar j)\coloneqq \begin{cases}
(\lambda_1,\dots, \lambda_i-2, \dots, \lambda_k) \quad &\text{if $i=j$}\\    
(\lambda_1,\dots, \lambda_i-1, \dots,\lambda_j-1,\dots  \lambda_k) \quad &\text{if $i\neq j$}
\end{cases}\,.
\]
Again, the sequence has to be reordered if necessary to become non-decreasing. We also write
\[
p_{\lambda(\bar i)}\coloneqq[\sym_{n-1}:\sym_{\lambda(\bar i)}]\quad ,\quad p_{\lambda(\bar i, \bar j)}\coloneqq [\sym_{n-2}:\sym_{\lambda(\bar i, \bar j)}]\,.
\]
Note that 
\begin{equation}\label{eq:pnumber}
\begin{aligned}
p_{\lambda(\bar i)}&=|\bigl\{[g]\in \sym_\lambda\backslash \sym_n\mid g(1)\in I_i \bigr\}|\,,\\ p_{\lambda(\bar i, \bar j)}&=|\bigl\{[g]\in \sym_\lambda\backslash \sym_n\mid g(1)\in I_i,\, g(2)\in I_j \bigr\}|\,.
\end{aligned}
\end{equation}
Furthermore, we write
\[
w_i\coloneqq \dim W_i\quad,\quad w\coloneqq \dim \WW=\prod_{i=1}^k w_i \,.
\]
and
\[
r_i\coloneqq \rk E_i\quad, \quad s\coloneqq \rk\EE=\prod_{i=1}^k r_i^{\lambda_i}\,.
\]

\begin{lemma}\label{lem:c1G} We have
$c_1(G)=sw\Bigl(\sum_{i=1}^k\frac{p_{\lambda(\bar i)}}{r_i} c_1(E_i)\Bigr)_{S^n}$\,.    
\end{lemma}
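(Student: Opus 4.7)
The plan is to compute $c_1(G)$ directly from the non-equivariant direct sum decomposition \eqref{eq:Indsummands}, since the first Chern class depends only on the underlying sheaf. By additivity, $c_1(G)=\sum_{[g]\in \sym_\lambda\backslash \sym_n}c_1(g^*(\EE\otimes \WW))$. After forgetting the $\sym_\lambda$-structure, $\WW$ becomes a trivial $w$-dimensional vector space, so $\EE\otimes\WW\cong\EE^{\oplus w}$ and this reduces the problem to computing $c_1(G)=w\sum_{[g]}c_1(g^*\EE)$.

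Next I would compute $c_1(\EE)$ for the box product $\EE=F_1\boxtimes\cdots\boxtimes F_n$, where $F_j:=E_\ell$ whenever $j\in I_\ell$. Writing $\EE=\bigotimes_j \tau_j^*F_j$ with $\tau_j\colon S^n\to S$ the $j$-th projection and iterating $c_1(A\otimes B)=\rk(B)c_1(A)+\rk(A)c_1(B)$ yields
\begin{equation*}
c_1(\EE)=\sum_{j=1}^n \frac{s}{\rk F_j}\,\tau_j^*c_1(F_j)=s\sum_{\ell=1}^k \frac{1}{r_\ell}\sum_{j\in I_\ell}\tau_j^*c_1(E_\ell).
\end{equation*}
For $g\in \sym_n$, the pullback $g^*$ just permutes the projection factors, so the formula for $c_1(g^*\EE)$ has $I_\ell$ replaced by the $\ell$-th block of the set-valued partition of $\{1,\dots,n\}$ associated to $[g]$ via the bijection recalled just after \eqref{eq:Indsummands}.

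The last step is to sum over $[g]$ and collect coefficients. For fixed indices $j$ and $\ell$, the coefficient of $\tau_j^*c_1(E_\ell)$ in $\sum_{[g]}c_1(g^*\EE)$ equals $\frac{s}{r_\ell}$ times the number of cosets $[g]$ for which $j$ lies in the $\ell$-th block of the associated set-valued partition. This count equals the multinomial $\binom{n-1}{\lambda_1,\dots,\lambda_\ell-1,\dots,\lambda_k}=p_{\lambda(\bar\ell)}$ of ways to distribute $\{1,\dots,n\}\setminus\{j\}$ among the remaining positions; it is manifestly independent of $j$ and agrees with the first identity of \eqref{eq:pnumber}. Since $\sum_{j}\tau_j^*c_1(E_\ell)=c_1(E_\ell)_{S^n}$ by definition of the subscript, collecting everything yields
\begin{equation*}
c_1(G)=w\sum_{\ell=1}^k \frac{s\cdot p_{\lambda(\bar\ell)}}{r_\ell}\,c_1(E_\ell)_{S^n}=sw\Bigl(\sum_{\ell=1}^k \frac{p_{\lambda(\bar\ell)}}{r_\ell}c_1(E_\ell)\Bigr)_{S^n},
\end{equation*}
as asserted. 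The argument is essentially combinatorial, and the only non-routine ingredient is the identification of the counting number with $p_{\lambda(\bar\ell)}$, which is immediate from \eqref{eq:pnumber}; I expect no serious obstacle.
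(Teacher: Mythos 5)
Your proof is correct and follows essentially the same route as the paper: reduce to the underlying non-equivariant sheaf (pulling out the factor $w$), use the coset decomposition \eqref{eq:Indsummands}, and identify the relevant count of cosets with $p_{\lambda(\bar\ell)}$ via \eqref{eq:pnumber}. The only difference is cosmetic: you compute the coefficient of $\tau_j^*c_1(E_\ell)$ for every $j$ directly (noting the count is independent of $j$), whereas the paper computes only the first projection factor and invokes $\sym_n$-equivariance to conclude the others agree.
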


\begin{proof}
The $\sym_n$-linearisation of $G$ does not play a role for its first Chern class. Hence, we can treat $\WW$ just as a $w$-dimensional vector space, and get $c_1(G)=w\cdot c_1\bigr(G_{\lambda}^{\mathbf 1}(E_1,\dots, E_k)\bigl)$.
Hence, it suffices to prove the formula
\begin{equation}\label{eq:ChernG1}
c_1\bigr(G_{\lambda}^{\mathbf 1}(E_1,\dots, E_k)\bigl)= s\Bigl(\sum_{i=1}^k\frac{p_{\lambda(\bar i)}}{r_i} c_1(E_i)\Bigr)_{S^n}\,.
\end{equation}
Recall that there is the non-equivariant direct sum decomposition
\[
G_{\lambda}^{\mathbf 1}(E_1,\dots, E_k)\cong \bigoplus_{[g]\in \sym_\lambda\backslash \sym_n} g^*\EE    \,.
\]
For $i=1,\dots, k$, there are $p_{\lambda(\bar i)}$ summands of the form $g^*\EE\cong E_i\boxtimes \cF$ where $\cF$ is some permutation of the box factors of $E_1^{\boxtimes \lambda_1}\boxtimes\dots\boxtimes E_i^{\boxtimes \lambda_i-1}\boxtimes\dots\boxtimes E_k^{\boxtimes \lambda_k}$; see \eqref{eq:pnumber}. In particular, $\rk \cF=\frac s{r_i}$, and the formula for the first Chern class of tensor products gives
\[
c_1(E_i\boxtimes \cF)=p_1^*\bigl( \frac s{r_i} c_1(E_i) \bigr)+ \sum_{j=2}^np_j^*(\text{something})\,. 
\]
Summing up all these summands for the various $i=1,\dots k$ gives
\[
c_1\bigr(G_{\lambda}^{\mathbf 1}(E_1,\dots, E_k)\bigl)= p_1^*\Bigl(s\bigl(\sum_{i=1}^k\frac{p_{\lambda(\bar i)}}{r_i} c_1(E_i)\bigr)\Bigr)+ \sum_{j=2}^np_j^*(\text{something})\,.
\]
By the $\sym_n$-equivariance of $G_{\lambda}^{\mathbf 1}(E_1,\dots, E_k)$, the value of \emph{something} in $p_j^*$, has to be the same for all $j$, including $j=1$. Hence, it is $s\bigl(\sum_{i=1}^k\frac{p_{\lambda(\bar i)}}{r_i} c_1(E_i)\bigr)$, which shows \eqref{eq:ChernG1}.
\end{proof}

Let $\lambda_i\geqslant 2$. We choose any transposition $\tau$ in $\sym_{\lambda_i}$ and identify $\sym_2$ with the subgroup $\langle \tau\rangle \subset \sym_{\lambda_i}$. We denote by 
\[
\alpha_i:=\dim\Hom_{\sym_2}(\mathbf 1, \Res_{\sym_{\lambda_i}}^{\sym_2} W_i)\quad, \quad \beta_i:=\dim\Hom_{\sym_2}(\fa, \Res_{\sym_{\lambda_i}}^{\sym_2} W_i)
\]
the multiplicities of the trivial and the sign representation of $\sym_2$ as direct summands of $\Res_{\sym_{\lambda_i}}^{\sym_2} W_i$. The numbers $\alpha_i$ and $\beta_i$ are independent of the chosen transposition $\tau$. 
With this notation, we have
\[
\Res_{\sym_{\lambda_i}}^{\sym_2} W_i\cong \mathbf 1^{\oplus \alpha_i}\oplus \fa^{\oplus \beta_i}\,.
\]

\begin{lemma}\label{lem:rkG} We have
\begin{align*}&\rk((G_{\mid \Delta_{12}}\otimes \fa_{12})^{\sym_{12}})\\
=&sw \left(\sum_{1\leqslant i<j\leqslant k} p_{\lambda}(\bar i, \bar j)  + \sum_{i \text{ with }\lambda_i\geqslant 2} \frac{p_\lambda(\bar i, \bar i)}{r_i^2 w_i}\Bigr(\alpha_i\binom{r_i}2 + \beta_i\binom{r_i+1}2   \Bigl) \right)\,.
\end{align*}    
\end{lemma}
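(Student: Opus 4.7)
The plan is to reduce the claim to a character computation at a generic point of $\Delta_{12}$. Pick a point $x=(y,y,x_3,\dots,x_n)\in \Delta_{12}$ with $x_3,\dots,x_n$ pairwise distinct and distinct from $y$, so that $\mathrm{Stab}_{\sym_n}(x)=\sym_{12}$. Since the anti-invariants of a $\sym_{12}$-representation $V$ have dimension $\tfrac12(\dim V-\tr((12)|V))$, and since the invariant rank of $(G_{\mid \Delta_{12}}\otimes\fa_{12})^{\sym_{12}}$ can be computed at any single fixed point, the task becomes
\[
\rk\bigl((G_{\mid \Delta_{12}}\otimes\fa_{12})^{\sym_{12}}\bigr)=\tfrac12\bigl(p_\lambda\cdot sw-\tr((12)\mid G_x)\bigr).
\]

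Next, I would use the non-equivariant direct sum decomposition \eqref{eq:Indsummands} to split $G_x=\bigoplus_{[g]\in \sym_\lambda\backslash\sym_n}(\EE\otimes\WW)_{gx}$. A standard bookkeeping for the $\sym_n$-equivariant structure of an induced sheaf shows that $(12)$ permutes the summands by sending $[g]$ to $[g(12)]$, so only the summands with $g(12)g^{-1}=(g(1),g(2))\in\sym_\lambda$ contribute to the trace; equivalently, $\phi_g(1)=\phi_g(2)$, say both equal to $\ell$. By \eqref{eq:pnumber} the number of such cosets is $p_{\lambda(\bar\ell,\bar\ell)}$, and they only occur when $\lambda_\ell\geqslant 2$.

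On a fixed summand, the equivariant structure identifies the action of $(12)$ with that of the transposition $h:=(g(1),g(2))\in\sym_{I_\ell}\subset\sym_\lambda$ on the fiber $(\EE\otimes\WW)_{gx}$. Now $h$ swaps the two factors at positions $g(1),g(2)$ of the box product $\EE$, which are both $(E_\ell)_y$, and it acts on $\WW$ trivially on every $W_m$ with $m\ne\ell$ and as a transposition $\tau\in\sym_{\lambda_\ell}$ on $W_\ell$. Since the swap on $(E_\ell)_y\otimes(E_\ell)_y$ has trace $r_\ell$, and the remaining tensor factors contribute dimensions $s/r_\ell^2$ and $w/w_\ell$, the trace of $(12)$ on that summand equals $\frac{sw}{r_\ell w_\ell}\tr(\tau\mid W_\ell)$. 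Summing over fixed cosets gives
\[
\tr((12)\mid G_x)=\sum_{\ell:\lambda_\ell\geqslant 2} p_{\lambda(\bar\ell,\bar\ell)}\cdot \frac{sw}{r_\ell w_\ell}\tr(\tau\mid W_\ell).
\]

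Finally, I would assemble the formula. Splitting $p_\lambda$ according to the pair $(\phi_g(1),\phi_g(2))$ gives $p_\lambda=2\sum_{i<j}p_{\lambda(\bar i,\bar j)}+\sum_{\ell:\lambda_\ell\geqslant 2}p_{\lambda(\bar\ell,\bar\ell)}$, so
\[
\tfrac12\bigl(p_\lambda sw-\tr((12)\mid G_x)\bigr)=sw\!\!\sum_{1\leqslant i<j\leqslant k}\!\! p_{\lambda(\bar i,\bar j)}+\frac{sw}{2}\!\!\sum_{\ell:\lambda_\ell\geqslant 2}\!\frac{p_{\lambda(\bar\ell,\bar\ell)}}{r_\ell w_\ell}\bigl(r_\ell w_\ell-\tr(\tau\mid W_\ell)\bigr).
\]
Substituting $w_\ell=\alpha_\ell+\beta_\ell$ and $\tr(\tau\mid W_\ell)=\alpha_\ell-\beta_\ell$ from the definition of $\alpha_\ell,\beta_\ell$, and noting the identity $r_\ell w_\ell-\tr(\tau|W_\ell)=(r_\ell-1)\alpha_\ell+(r_\ell+1)\beta_\ell=\tfrac{2}{r_\ell}\bigl(\alpha_\ell\binom{r_\ell}{2}+\beta_\ell\binom{r_\ell+1}{2}\bigr)$, yields the stated formula. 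The main obstacle is the correct tracking of the $\sym_n$-equivariant structure of the induced sheaf in step three; in particular, verifying that on a fixed coset $[g]$ the action of $(12)$ reduces precisely to the $\sym_\lambda$-action by the transposition $g(12)g^{-1}$ requires unwinding the definition of induction carefully, and all the combinatorial counting with the auxiliary partitions $\lambda(\bar i,\bar j)$ has to be kept straight.
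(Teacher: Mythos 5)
Your proposal is correct, and it takes a genuinely different route from the paper. The paper restricts the non-equivariant decomposition $G\cong\bigoplus_{[g]}g^*(\EE\otimes\WW)$ to $\Delta_{12}$ and then identifies the $\sym_{12}$-invariant subsheaf \emph{as a sheaf}: free orbits of cosets contribute one full copy of $g^*(\EE\otimes\WW)_{\mid\Delta_{12}}$, while a fixed coset contributes $\bigl((\wedge^2E_i)^{\oplus\alpha_i}\oplus(S^2E_i)^{\oplus\beta_i}\bigr)\boxtimes(\cS\otimes\WW(\bar i))$, after which one reads off ranks. You instead compute only the rank, via the character formula $\dim(V\otimes\fa)^{\sym_2}=\tfrac12\bigl(\dim V-\tr(\tau\mid V)\bigr)$ at a point of $\Delta_{12}$, reducing everything to the trace of $(12)$ on the fiber of the induced sheaf. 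I checked the three ingredients: the identification of the action of $(12)$ on a fixed summand $[g]$ with the $\sym_\lambda$-linearisation applied to $g(1\,2)g^{-1}=(g(1)\,g(2))$ is the correct unwinding of the induction (and the trace of the factor swap on $(E_\ell)_y^{\otimes 2}$ is indeed $r_\ell$); the coset count $p_\lambda=2\sum_{i<j}p_{\lambda(\bar i,\bar j)}+\sum_{\ell:\lambda_\ell\geqslant 2}p_{\lambda(\bar\ell,\bar\ell)}$ is right; and the algebraic identity $r_\ell w_\ell-\tr(\tau\mid W_\ell)=(r_\ell-1)\alpha_\ell+(r_\ell+1)\beta_\ell=\tfrac2{r_\ell}\bigl(\alpha_\ell\binom{r_\ell}2+\beta_\ell\binom{r_\ell+1}2\bigr)$ reproduces the stated formula exactly. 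The trade-off: the paper's argument yields the explicit sheaf-theoretic description of the invariants (potentially useful beyond rank, e.g.\ for higher Chern classes of the quotient $\cC$), whereas your character-theoretic computation is shorter, avoids splitting $\Res^{\sym_2}_{\sym_{\lambda_i}}W_i$ into isotypic pieces until the very end, and would generalise readily to traces of other permutations.
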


\begin{proof}
Restricting \eqref{eq:Indsummands} to the $\sym_{12}$-invariant subvariety $\Delta_{12}\subset S^n$ and tensoring by the sign representation $\fa_{12}$, we get 
\begin{equation}\label{eq:IndDeltasummands}
G_{\mid \Delta_{12}}\otimes \fa_{12} \cong \bigoplus_{[g]\in \sym_\lambda\backslash \sym_n} \bigl(g^*(\EE\otimes\WW)\bigr)_{\mid \Delta_{12}}\otimes \fa_{12} \,.
\end{equation}
In the following, we will often just write $g^*(\EE\otimes\WW)_{\mid \Delta_{12}}$ instead of $\bigl(g^*(\EE\otimes\WW)\bigr)_{\mid \Delta_{12}}$. This means that we leave out one pair of brackets for better readability, tacitly understanding that the pull-back along $g$ has to be applied before restricting to $\Delta_{12}$ since $\Delta_{12}\subset S^n$ is not stable under the $G$-action.

Let $\tau=(1\,\, 2)$ denote the non-trivial element of $\sym_{12}$. 
If $g(1)\in I_i$ and $g(2)\in I_j$ with $i\neq j$, then $g\tau$ represents a different coset than $g$ in $\sym_{\lambda}\backslash \sym_n$ and the $\sym_{12}$-action on $G_{\mid \Delta_{12}}\otimes \fa_{12}$ permutes the two corresponding summands in  
\eqref{eq:IndDeltasummands}. This contributes to the $\sym_{12}$-invariant subbundle of $G_{\mid \Delta_{12}}\otimes \fa_{12}$ as 
\begin{equation} \label{eq:twosummandsinva}
\Bigl(\bigl(g^*(\EE\otimes \WW)_{\mid_{\Delta_{12}}}\oplus (g\tau)^*(\EE\otimes \WW)_{\mid_{\Delta_{12}}}\bigr)\otimes \fa_{12}   \Bigr)^{\sym_{12}}\cong g^*(\EE\otimes \WW)_{\mid_{\Delta_{12}}}\,; 
\end{equation}
see e.g.\ \cite[Remark 2.3]{krug_remarks_2018}.
Using the identification 
\[
X^{n-1}=X\times X^{n-2}\xrightarrow \cong \Delta_{12}\quad,\quad (x;x_3,\dots, x_n)\mapsto (x,x,x_3,\dots, x_n)\,,
\]
the right side of \eqref{eq:twosummandsinva} is of the form 
\[
g^*(\EE\otimes \WW)_{\mid_{\Delta_{12}}}\cong \bigl((E_i\otimes E_j)\boxtimes \cR\bigr)\otimes \WW 
\]
where $\WW$ just stands for the underlying $w$-dimensional vector space of the representation $\WW$, and $\cR$ is some permutation of the box-factors of
\[
E_1^{\boxtimes \lambda_1}\boxtimes \dots \boxtimes E_i^{\boxtimes \lambda_i-1}\boxtimes\dots \boxtimes E_j^{\boxtimes \lambda_j-1}\boxtimes\dots \boxtimes E_k^{\boxtimes \lambda_k}\,.
\]
In particular, $\rk g^*(\EE\otimes \WW)_{\mid_{\Delta_{12}}}=sw$. For a fixed pair $1\leqslant i<j\leqslant k$, there are exactly $p_{\lambda(\bar i, \bar j)}$ cosets in $\sym_\lambda\backslash \sym_n$ whose representatives satisfy $g(1)\in I_i$, $g(2)\in I_j$; see \eqref{eq:pnumber}. Hence, summing up over all $1\leqslant i<j\leqslant k$ we get a direct summand of $(G_{\mid \Delta_{12}}\otimes \fa_{12})^{\sym_{12}}$ of rank 
\begin{equation}
sw \left(\sum_{1\leqslant i<j\leqslant k} p_{\lambda}(\bar i, \bar j)\right) 
\end{equation}
which is exatly the first summand of the asserted formula. To get a complementary direct summand of $(G_{\mid \Delta_{12}}\otimes \fa_{12})^{\sym_{12}}$, we consider $g\in \sym_n$ with $g(1),g(2)\in I_i$ for some $i=1,\dots, k$. Note that then necessarily $\lambda_i\ge 2$. In this case $g$ and $g\tau$ represent the same coset in $\sym_\lambda\backslash \sym_n$, hence the $\sym_{12}$-action on $G_{\mid \Delta_{12}}\otimes \fa_{12}$ restricts to an action on the direct summand $g^*(\EE\otimes \WW)_{\mid_{\Delta_{12}}}\otimes \fa_{12}$. As an $\sym_{12}$-equivariant sheaf, this direct summand is given (up to the twist by $\fa_{12}$ that we will incorporate later) by 
\begin{align*}
g^*(\EE\otimes \WW)_{\mid \Delta_{12}}&\cong \bigl(E_i^{\otimes 2}\otimes \Res_{\sym_{\lambda_i}}^{\sym_2} W_i\bigr)\boxtimes \bigl(\cS\otimes \WW(\bar i)\bigr)\\&\cong \bigl((E_i^{\otimes 2})^{\oplus \alpha_i}\oplus (E_i^{\otimes 2}\otimes \fa_{12})^{\oplus \beta_i} \bigr)\boxtimes \bigl(\cS\otimes \WW(\bar i)\bigr)\,. 
\end{align*}
Here, $\WW(\bar i)=\otimes_{j\neq i} W_j$ equipped with the trivial $\sym_{12}$-action. In other words, $\WW(\bar i)$ is just a vector space of dimension $\prod_{j\neq i} w_j=\frac{w}{w_j}$. Furthermore, $\cS$ is some permutation of 
the box-factors of
\[
E_1^{\boxtimes \lambda_1}\boxtimes \dots \boxtimes E_i^{\boxtimes \lambda_i-2}\boxtimes E_k^{\boxtimes \lambda_k}\,.
\]
Tensoring with $\fa_{12}$ and then taking $\sym_{12}$-invariants gives
\[
\bigl(g^*(\EE\otimes \WW)_{\mid \Delta_{12}}\otimes \fa_{12} \bigr)^{\sym_{12}}\cong \bigl((\wedge^2 E_i)^{\oplus \alpha_i}\oplus (S^2E_i)^{\oplus \beta_i} \bigr)\boxtimes \bigl(\cS\otimes \WW(\bar i)\bigr) 
\]
which has rank $\bigr(\alpha_i\binom{r_i}2 + \beta_i\binom{r_i+1}2   \bigl)\frac{sw}{r_i^2w_i}$. For a given $i$, there are $p_\lambda(\bar i, \bar i)$ cosets in $\sym_\lambda\backslash \sym_n$ whose representatives satisfy $g(1)=g(2)\in I_i$; see \eqref{eq:pnumber}. Taking the sum over all $i$ with $\lambda_i\geqslant 2$ gives exactly the second summand of the asserted formula, namley
\[
\sum_{i \text{ with }\lambda_i\geqslant 2} \frac{p_{\lambda(\bar i, \bar i)}sw}{r_i^2 w_i}\Bigr(\alpha_i\binom{r_i}2 + \beta_i\binom{r_i+1}2\Bigl)\,.\qedhere
\]
\end{proof}

\begin{theorem}\label{thm:c1} We have
\[
c_1\bigl(F_\lambda^\WW(E_1,\dots ,E_k)\bigr) = \bigl(B_\lambda^\WW(E_1,\dots, E_k)\bigr)_{S^{[n]}} - R_\lambda^\WW(E_1,\dots, E_k)\cdot \delta
\]    
where 
\begin{align*}
B_\lambda^\WW(E_1,\dots, E_k)&= sw\bigl(\sum_{i=1}^k\frac{p_{\lambda(\bar i)}}{r_i} c_1(E_i)\bigl) \,\in \Pic(S)\,,\\
R_\lambda^\WW(E_1,\dots, E_k)&=sw \left(\sum_{1\leqslant i<j\leqslant k} p_{\lambda}(\bar i, \bar j)  + \sum_{i \text{ with }\lambda_i\geqslant 2} \frac{p_\lambda(\bar i, \bar i)}{r_i^2 w_i}\Bigr(\alpha_i\binom{r_i}2 + \beta_i\binom{r_i+1}2   \Bigl) \right)\in \ZZ\,.
\end{align*}
\end{theorem}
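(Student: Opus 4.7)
The proof will be essentially a direct assembly of the three main results established earlier in this section, with no substantive additional work required. The plan is to apply \autoref{prop:c1gen} to the specific $\sym_n$-equivariant locally free sheaf $G = G_\lambda^\WW(E_1,\dots,E_k)$ on $S^n$, using the preparatory lemmas to identify both ingredients of that formula.

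First, I would verify the hypothesis of \autoref{prop:c1gen}, namely that $c_1(G)$ is of the form $B_{S^n}$ for some line bundle $B$ on $S$. This is precisely the content of \autoref{lem:c1G}, which yields
\[
c_1\bigl(G_\lambda^\WW(E_1,\dots,E_k)\bigr) = \bigl(B_\lambda^\WW(E_1,\dots, E_k)\bigr)_{S^n}
\]
with $B_\lambda^\WW(E_1,\dots, E_k) = sw\sum_{i=1}^k\frac{p_{\lambda(\bar i)}}{r_i}c_1(E_i)$ as in the theorem statement.

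Next, since $F_\lambda^\WW(E_1,\dots, E_k) = \Psi(G_\lambda^\WW(E_1,\dots,E_k))$ by definition, applying \autoref{prop:c1gen} immediately gives
\[
c_1\bigl(F_\lambda^\WW(E_1,\dots,E_k)\bigr) = \bigl(B_\lambda^\WW(E_1,\dots, E_k)\bigr)_{S^{[n]}} - \rk\bigl((G_{\mid \Delta_{12}}\otimes \fa_{12})^{\sym_{12}}\bigr)\cdot \delta.
\]
Finally, \autoref{lem:rkG} identifies this rank as exactly $R_\lambda^\WW(E_1,\dots, E_k)$, thereby completing the proof.

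Since everything here amounts to quoting the three preceding results, no genuine obstacle remains at this stage; all the real work was done in establishing \autoref{prop:c1gen}, \autoref{lem:c1G}, and \autoref{lem:rkG}. The only care needed is bookkeeping, namely checking that the expressions produced by \autoref{lem:c1G} and \autoref{lem:rkG} coincide termwise with the quantities $B_\lambda^\WW$ and $R_\lambda^\WW$ as defined in the theorem. A direct comparison shows that this is the case, so no further computation is required.
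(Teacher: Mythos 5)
Your proposal is correct and matches the paper's proof exactly: the paper likewise deduces the theorem by combining \autoref{prop:c1gen}, \autoref{lem:c1G}, and \autoref{lem:rkG}, using $F_\lambda^\WW(E_1,\dots,E_k)=\Psi\bigl(G_\lambda^\WW(E_1,\dots,E_k)\bigr)$. No further comment is needed.
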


\begin{proof}
Since $F_\lambda^\WW(E_1,\dots ,E_k)=\Psi\bigl(G_\lambda^\WW(E_1,\dots ,E_k) \bigr)$, 
this is a combination of \autoref{prop:c1gen}, \autoref{lem:c1G}, and \autoref{lem:rkG}.     
\end{proof}

\subsection{Examples}

To illustrate how the formula of \autoref{thm:c1} works, we reprove some formulas from the literature as special cases.

The formula for the first Chern class looks much simpler in the special case that all the $W_i$, and hence also $\WW$, are trivial. Then $w_1=\dots=w_k=w=1$, $\alpha_1=\dots=\alpha_k=1$, $\beta_1=\dots=\beta_k=0$. Hence, \autoref{thm:c1} gives
\begin{equation}\label{eq:c1w1}
\begin{aligned}
&c_1\bigl(F_\lambda^{\mathbf 1}(E_1,\dots ,E_k)\bigr)\\ =&s\Bigl( \bigl(\sum_{i=1}^k\frac{p_{\lambda(\bar i)}}{r_i} c_1(E_i)\bigr)_{S^{[n]}} - \bigl(\sum_{1\le i<j\le k} p_{\lambda}(\bar i, \bar j)  + \sum_{i \text{ with }\lambda_i\ge 2} \frac{p_\lambda(\bar i, \bar i)}{r_i^2 }\binom{r_i}2 \bigr)\cdot \delta\Bigr)
\end{aligned}
\end{equation}
We get back the formula for the first Chern class of (non-generalised) tautological bundles, by considering $\lambda=(n-1,1)$, $E_1=\reg_S$, and $E_2=E$, which gives $F_{(n-1,1)}^{\mathbf 1}(\reg_S, E)\cong E^{[n]}$; compare \eqref{eq:getbacktaut}. In this case, we have $s=\rk(E)$. Furthermore, as $c_1(E_1)=c_1(\reg_S)=0$, only the $i=2$ part of the first sum in 
\eqref{eq:c1w1} contributes. Since $\lambda(\bar 2)=(n-1)$, hence $p_{\lambda(\bar 2)}=1$, we get $s \bigl(\sum_{i=1}^k\frac{p_{\lambda(\bar i)}}{r_i} c_1(E_i)\bigr)_{S^{[n]}}=\bigl(c_1(E)\bigr)_{S^{[n]}}$. For the $\delta$-part of \eqref{eq:c1w1}, we note that $\lambda(\bar 1,\bar 2)=(n-2)$, hence $p_{\lambda(\bar 1,\bar 2)}=1$, so the value of the first sum is 1. The second sum is zero, since $\lambda_i\ge 2$ only for $i=1$, but $r_1=1$, hence $\binom{r_1}2=0$. In summary, 
\[
c_1(E^{[n]})=\bigl(c_1(E)\bigr)_{S^{[n]}}-\rk(E)\cdot \delta\,,
\]
which agrees with \cite[Lemma 1.5]{wandel_tautological_2016}.

As a second special case, let us consider $\lambda=(1,\dots,1)$. We have $p_{\lambda(\bar i)}=(n-1)!$ for all $1\leqslant i\leqslant k$ and $p_{\lambda(\bar i,\bar j)}=(n-2)!$ for all $1\leqslant i< j\leqslant k$. As there are no $\lambda_i\geqslant 2$, the formula \eqref{eq:c1w1} specializes to 
\[
c_1\bigl(F_{(1,\dots,1)}^{\mathbf 1}(E_1,\dots ,E_n)\bigr) =(n-1)! s\Bigl( \bigl(\sum_{i=1}^k\frac{1}{r_i} c_1(E_i)\bigr)_{S^{[n]}} - \frac n2 \delta\Bigr)\,,
\]    
which agrees with the formula in \cite[Remark 2.10]{ogrady_many}.

\subsection{Repackaging the Formulas}

Still in the case that $\WW=\mathbf 1$, we can state our formula in a more compact way by summarizing the first Chern classes of $F_{\lambda}^{\mathbf 1}$ for all partitions $\lambda$ of $n$ in something similar to a counting function.
For this, the key observation is that, given locally free sheaves $E_1,\dots, E_n$ on $S$, all the $G_\lambda^{\mathbf 1}(E_1,\dots, E_k)$ occur as direct summands of 
$\cE^{\boxtimes n}$ where $\cE=E_1\oplus \dots \oplus E_n$. Hence, all the $F_\lambda^{\mathbf 1}(E_1,\dots, E_k)$
occur as direct summands of $\Psi(\cE^{\boxtimes n})$. The first Chern class of the latter is easy to compute using \autoref{prop:c1gen}, namely
\begin{equation}\label{eq:c1cE}
c_1\bigl( \Psi(\cE^{\boxtimes n})\bigr)=r^{n-1}\bigl( c_1(\cE) \bigr)_{S^{[n]}}-r^{n-2}\binom r2 \cdot \delta
\end{equation}
where $r=\rk(\cE)=r_1+\dots+r_n$. We can enrich \eqref{eq:c1cE} by introducing formal variables $t_1,\dots, t_n$ in order to keep track of the direct summands $E_i$ of $\cE$. More precisely, we consider 
\[
\cE_t:=E_1t_1+\dots+ E_n t_n\in K(S)[t_1,\dots, t_n]\,,\quad r_t:=r_1t_1+\dots +r_nt_n\in \CC[t_1,\dots, t_n]\,.
\]
Then, in the $\CC[t_1,\dots , t_n]$-module $\Pic(S^{[n]})[t_1,\dots ,t_n]$, we have the equality
\begin{equation}\label{eq:c1cEt}
c_1\bigl( \Psi(\cE_t^{\boxtimes n})\bigr)=r_t^{n-1}\bigl( c_1(\cE_t) \bigr)_{S^{[n]}}-r_t^{n-2}\binom{r_t} 2 \cdot \delta
\end{equation}
where the binomial coefficient is interpreted as $\binom{r_t} 2=\frac{r_t(r_t-1)}2$. We get back the formulas for the first Chern classes of the individual $F_\lambda^{\mathbf 1}(E_1,\dots, E_k)$ since $c_1\bigl(F_{(\lambda_1,\dots, \lambda_k)}^{\mathbf 1}(E_1,\dots E_k)\bigr)$ is the coefficient of $t_1^{\lambda_1}\cdots t_k^{\lambda_k}$ in \eqref{eq:c1cEt}. 

We can do something very similar if we replace the trivial by the sign representation. Let $\fa_n$ be the sign representation of $\sym_n$, and $\fa_\lambda=\Res_{\sym_n}^{\sym_\lambda} \fa_n\cong \fa_{\lambda_1}\otimes\dots\otimes \fa_{\lambda_k}$. Then, all $F_\lambda^{\fa_\lambda}(E_1,\dots, E_k)$ for varying partitions $\lambda$ of $n$ are direct summands of $\Psi\bigl(\cE^{\boxtimes n}\boxtimes \fa_n \bigr)$. The first Chern classes of an individual $F_\lambda^{\fa_\lambda}(E_1,\dots, E_k)$ can be recovered as the coefficient of $t_1^{\lambda_1}\cdots t_k^{\lambda_k}$ in
\[
c_1\bigl( \Psi(\cE_t^{\boxtimes n}\otimes \fa_n)\bigr)=r_t^{n-1}\bigl( c_1(\cE_t) \bigr)_{S^{[n]}}-r_t^{n-2}\binom{r_t+1} 2 \cdot \delta\,.
\]
On may wonder whether the formulas for the first Chern classes of $F_\lambda^\WW(E_1,\dots, E_k)$ for all partitions $\lambda$ and \emph{all} representation $\WW\in \irr(\sym_\lambda)$ can be summarised in a similarly elegant way. At least, all the $G_\lambda^\WW(E_1,\dots, E_k)$ are contained in $\cE^{\boxtimes n}\otimes \CC\langle \sym_n\rangle$ where $\CC\langle \sym_n\rangle$ is the regular $\sym_n$-representation. Furthermore, \autoref{prop:c1gen} gives
\begin{equation}\label{eq:c1reg}
c_1\bigl(\Psi(\cE^{\boxtimes n}\otimes \CC\langle \sym_n\rangle)\bigr)=n!r^{n-1}\bigl( c_1(\cE) \bigr)_{S^{[n]}}-\frac{n!}2 r^{n} \cdot \delta\,.
\end{equation}
The question remains whether \eqref{eq:c1reg} can be enriched in a not too complicated way, such that one can recover the formula for the individual $c_1\bigl(F_\lambda^\WW(E_1,\dots, E_k)\bigr)$ from it.

\bibliography{exceptional}
\bibliographystyle{alpha}
\end{document}